\numberwithin{equation}{section}
\newcommand{\CAT}{\textup{CAT}}
\theoremstyle{plain}
\newtheorem{theorem}{Theorem}[section]
\newtheorem{lemma}[theorem]{Lemma}
\theoremstyle{definition}
\newtheorem{definition}[theorem]{Definition}
\theoremstyle{remark}
\title[Convergence to a common fixed point of mappings on the unit sphere]{Convergence to a common fixed point of a finite family of nonexpansive mappings on the unit sphere of a Hilbert space}
\author[T.~Ezawa]{Tatsuki~Ezawa}
\address[T.~Ezawa]
{Graduate School of Mathematics, Nagoya University, Chikusa-ku, Nagoya 464-8602, Japan}
\email{m14006q@math.nagoya-u.ac.jp}
\keywords{$\CAT(1)$ space, CQ projection method, Shrinking projection method, nonexpansive, $W$-mapping, fixed point}
\begin{document}
\begin{abstract}
In this paper, we consider the CQ projection method and the shrinking projection method given by a finite family of nonexpansive mappings of the unit sphere of a real Hilbert space, and prove strong convergence theorems to common fixed points of the mappings.
\end{abstract}
\maketitle

\section{Introduction}\label{sec:introduction}

Let $C$ be a nonempty closed convex subset of a real Hilbert space and $T$ a nonexpansive mapping from $C$ onto itself such that the fixed points set $F(T)$ is nonempty. Let $P_C$ be a metric projection to a nonempty closed convex subset. In 2000, Solodov-Svaiter \cite{Solodov-Svaiter} introduced the CQ projection method and in 2003, Nakajo-Takahashi \cite{Nakajo-Takahashi} consider the following iteration:
\begin{align*}
x_1 &:=x\in C, \\
y_n &:=\alpha_nx_n+(1-\alpha_n)Tx_n, \\
C_n &:=\{z\in C\ |\ \|y_n-z\|\leq\|x_n-z\|\}, \\
Q_n &:=\{ z\in C\ |\ \langle x_n-z,x_1-x_n\rangle\geq 0\}, \\
x_{n+1} &:=P_{C_n\cap Q_n}x_1,
\end{align*}
where $\{\alpha_n\}$ satisfies $\lim_{n\to\infty}\alpha_n=0$ and $\sum_{n=1}^\infty|\alpha_{n+1}-\alpha_n|<\infty$ and $P_{C_n\cap Q_n}$ is a metric projection from $C$ onto $C_n\cap Q_n$. Then they showed $\{x_n\}$ is converget to $P_{F(T)}x_1$. In 2006, Nakajo-Shimoji-Takahashi \cite{Nakajo-Shimoji-Takahashi} used $W$-mapping (see Definition \ref{Wmapping}) genereted to consider the followinf iteration:
\begin{align*}
x_1 &:=x\in C, \\
y_n &:=W_nx_n, \\
C_n &:=\{z\in C\ |\ \|y_n-z\|\leq\|x_n-z\|\}, \\
Q_n &:=\{ z\in C\ |\ \langle x_n-z,x_1-x_n\rangle\geq 0\}, \\
x_{n+1} &:=P_{C_n\cap Q_n}x_1,
\end{align*}
and showed $\{x_n\}$ is convergent to $P_Fx_1$, where $F$ is a common fixed points set of generater of $W$-mapping. In 2012, Kimura-Sat\^o \cite{KS3} consider similar iteration in real Hilbert sphere as following:
\begin{align*}
x_1 &:=x\in C, \\
y_n &:=Tx_n, \\
C_n &:=\{z\in C\ |\ d(y_n,z)\leq d(x_n,z)\}, \\
Q_n &:=\{ z\in C\ |\ \cos d(x_1,x_n)\cos d(x_0,x_n)\geq\cos d(x_1,z)\}, \\
x_{n+1} &:=P_{C_n\cap Q_n}x_1
\end{align*}
On the other hand, Takahashi-Takeuchi-Kubota \cite{Takahashi-Takeuchi-Kubota} introduced shrinking projection method in real Hilbert space as following iteration:
\begin{align*}
x_1 &:=x\in C, \\
y_n &:=\alpha_n x_n+(1-\alpha_n)Tx_n, \\
C_n &:=\{z\in C\ |\ \|y_n-z\|\leq \|x_n-z\|\}, \\
x_{n+1} &:=P_{C_n}x_1,
\end{align*}
where $0\leq\alpha_n<a<1$ for all $n\in\mathbb{N}$. Then they showed $\{x_n\}$ is convergent to $P_{(T)}$. In 2009, Kimura-Takahashi \cite{Kimura-Takahashi} considered shrinking projection method in Banach space and in Kimura \cite{Kimura} considered shrinking projection method in a real Hilbert ball which is a example of a Hadamard space.

In this paper, we consider CQ projection mathod and shrinking projection method for a finite family of nonexpansive mappings in a real Hilbert sphere which is a example of a complete $\CAT(1)$ space, that is, we showed following result:

\begin{theorem}[Theorem \ref{CQ}]
Let $C$ be a closed convex subset in real Hilbert shpere $S_H$ such that $d(v,v')\leq\pi/2$ for every $v,v'\in C$. Let $\alpha_{n,1},a_{n,2},\ldots,\alpha_{n,r}$ be real numbers for $n\in\mathbb{N}$ such that $\alpha_{n,i}\in[a,1-a]$ for every $i=1,2,\ldots,r$ where $0<a<1/2$, and let $T_1,T_2,\ldots,T_r$ be a finite number of nonexpansive mappings of C into itself such that $F:=\bigcap_{i=1}^rF(T_i)\neq\emptyset$. Let $W_n$ be the W-mappings of $X$ into itself generated by $T_1,T_2,\ldots,T_r$ and $\alpha_{n,1},\alpha_{n,2},\ldots,\alpha_{n,r}$ for $n\in\mathbb{N}$. For a given point $x_1\in C$, let $\{x_n\}$ be a sequence in $C$ generated by
\begin{align*}
y_n &:=  W_nx_n, \\
C_n &:= \{z\in C\ |\ d(y_n,z)\leq d(x_n,z)\}, \\
Q_n &:= \{ z\in C\ |\ \cos d(x_1, x_n)\cos d(x_n,z)\geq\cos d(x_1,z)\}, \\
x_{n+1} &:= P_{C_n\cap Q_n}x_1
\end{align*}
for all $n\in\mathbb{N}$. Then $\{x_n\}$ is well defined and convergent to $P_Fx_1$.
\end{theorem}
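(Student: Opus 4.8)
The plan is to follow the classical Nakajo--Takahashi analysis of the CQ method, transported into the geometry of the Hilbert sphere, which is a complete $\CAT(1)$ space; the standing hypothesis $d(v,v')\le\pi/2$ on $C$ is what guarantees that every nonempty closed convex subset of $C$ has a unique metric projection and that every cosine appearing below is nonnegative. First I would check that the iteration is well defined. The sets $C_n=\{z\in C : d(y_n,z)\le d(x_n,z)\}$ and $Q_n$ are closed and convex (the $\CAT(1)$ counterparts of the statement that, in Hilbert space, these are half-spaces), so $C_n\cap Q_n$ is closed and convex, and it remains to see that it is nonempty. For this I would prove by induction that $F\subseteq C_n\cap Q_n$ for all $n$. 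The inclusion $F\subseteq C_n$ holds because each $p\in F$ is fixed by every $T_i$, hence by $W_n$, so that $d(y_n,p)=d(W_nx_n,W_np)\le d(x_n,p)$ by nonexpansiveness of $W_n$. For $F\subseteq Q_n$, note that $Q_1=C$; and if $F\subseteq C_{n-1}\cap Q_{n-1}$, then $x_n=P_{C_{n-1}\cap Q_{n-1}}x_1$ is defined, and the variational characterization of the metric projection in a $\CAT(1)$ space of diameter $\le\pi/2$ says exactly that $\cos d(x_1,x_n)\cos d(x_n,p)\ge\cos d(x_1,p)$ for every $p\in C_{n-1}\cap Q_{n-1}$, i.e.\ $C_{n-1}\cap Q_{n-1}\subseteq Q_n$ and in particular $F\subseteq Q_n$.

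Next I would establish the two basic limits. Since $x_{n+1}\in Q_n$ we have $\cos d(x_1,x_n)\cos d(x_n,x_{n+1})\ge\cos d(x_1,x_{n+1})$; using $0\le\cos d(x_n,x_{n+1})\le 1$ and $\cos d(x_1,x_n)\ge 0$ this gives $\cos d(x_1,x_n)\ge\cos d(x_1,x_{n+1})$, so $\{d(x_1,x_n)\}$ is nondecreasing, while $P_Fx_1\in C_n\cap Q_n$ forces $d(x_1,x_{n+1})\le d(x_1,P_Fx_1)$; hence $\{d(x_1,x_n)\}$ converges. Moreover $x_n$ is the metric projection of $x_1$ onto $Q_n$ (the sphere analogue of projecting onto a half-space whose boundary passes through $x_n$), and feeding the convergence of $\{d(x_1,x_n)\}$ back into the same inequality yields $\cos d(x_n,x_{n+1})\to1$, i.e.\ $d(x_n,x_{n+1})\to0$; care is needed only if the common limit of $\{d(x_1,x_n)\}$ equals $\pi/2$, a case I would rule out (or absorb) via the sharper $\CAT(1)$ comparison estimate, e.g.\ working with $1-\cos d$ in place of $d$. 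Finally, since $x_{n+1}\in C_n$ we have $d(y_n,x_{n+1})\le d(x_n,x_{n+1})$, whence $d(x_n,W_nx_n)=d(x_n,y_n)\le d(x_n,x_{n+1})+d(x_{n+1},y_n)\le2\,d(x_n,x_{n+1})\to0$.

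Now I would identify the limit. The sequence $\{x_n\}$ is automatically bounded (it lies in $C$, of diameter $\le\pi/2$), hence has $\Delta$-convergent subsequences. From the $W$-mapping estimate --- with coefficients $\alpha_{n,i}\in[a,1-a]$, $d(x_n,W_nx_n)\to0$ and $F\neq\emptyset$ imply $d(x_n,T_ix_n)\to0$ for each $i$ --- together with the $\Delta$-demiclosedness of $I-T_i$ on a complete $\CAT(1)$ space of radius $\le\pi/2$, every $\Delta$-cluster point $p$ of $\{x_n\}$ lies in $F(T_i)$ for all $i$, so $p\in F$. On the other hand, the bound $d(x_1,x_{n+1})\le d(x_1,P_Fx_1)$ and the $\Delta$-lower semicontinuity of $d(x_1,\cdot)$ (i.e.\ $\Delta$-closedness of closed balls) give $d(x_1,p)\le\lim_nd(x_1,x_n)\le d(x_1,P_Fx_1)$; since $p\in F$ and $P_Fx_1$ is the unique nearest point of $F$ to $x_1$, this forces $p=P_Fx_1$. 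Thus $P_Fx_1$ is the unique $\Delta$-cluster point of the bounded sequence $\{x_n\}$, so $x_n\to P_Fx_1$ in the sense of $\Delta$-convergence, and in addition $d(x_1,x_n)\to d(x_1,P_Fx_1)$. To upgrade this to metric convergence I would invoke the $\CAT(1)$ analogue of the Kadec--Klee property: for a sequence $\Delta$-converging to a point $q$, convergence of $d(x_1,x_n)$ to $d(x_1,q)$ forces $d(x_n,q)\to0$ --- in Hilbert space this is the identity $\|x_n-q\|^2=\|x_n-x_1\|^2+2\langle x_n-x_1,x_1-q\rangle+\|x_1-q\|^2\to0$, and on the sphere it follows from the corresponding comparison-triangle inequality, the point being that $q=P_Fx_1$ is a genuine point of the space. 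Applying this with $q=P_Fx_1$ gives $x_n\to P_Fx_1$.

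The conceptual skeleton is exactly Nakajo--Takahashi (as adapted to the sphere by Kimura--Sat\^o and to $W$-mappings by Nakajo--Shimoji--Takahashi); I expect the main obstacle to be purely $\CAT(1)$-geometric: proving the convexity of $C_n$ and $Q_n$ and the projection inequality under the diameter bound $\pi/2$, establishing the $W$-mapping estimate, and --- most delicately --- running the last two steps in a space without an inner product, where weak lower semicontinuity and the Kadec--Klee identity must be replaced by spherical comparison estimates and one must keep every occurring cosine bounded away from $0$.
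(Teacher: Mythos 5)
Your overall skeleton matches the paper's proof (induction to get $F\subseteq C_n\cap Q_n$, monotonicity of $d(x_1,x_n)$ from the $Q_n$-inequality, $d(x_n,x_{n+1})\to 0$, hence $d(W_nx_n,x_n)\to 0$, then identification of $\Delta$-cluster points with $P_Fx_1$ and the Kadec--Klee-type upgrade of Lemma \ref{weak}). However, there is a genuine gap at the technical heart of the theorem: you assert, as a named ``$W$-mapping estimate,'' that $d(W_nx_n,x_n)\to 0$ together with $\alpha_{n,i}\in[a,1-a]$ and $F\neq\emptyset$ implies $d(T_ix_n,x_n)\to 0$ for every $i$. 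In Hilbert space this is the classical Nakajo--Shimoji--Takahashi step, but on the sphere it is precisely what must be proved, and the paper devotes the bulk of its argument to it: one first gets $d(T_rU_{n,r-1}x_n,x_n)\to 0$ from $d(W_nx_n,x_n)=\alpha_{n,r}\,d(T_rU_{n,r-1}x_n,x_n)$, and then descends the tower $U_{n,r-1},U_{n,r-2},\dots$ by a quantitative spherical computation: the comparison inequality of Theorem \ref{pal}, quasinonexpansiveness of the intermediate maps (Lemma \ref{quasi nonexpansive}), the uniform bound $\inf_n\cos d(x_n,z)>0$ for $z\in F$, and finally the elementary trigonometric Lemma \ref{sin} applied to subsequential limits of $\delta_n:=d(T_{i}U_{n,i-1}x_n,x_n)$. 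Nothing in your proposal indicates this mechanism, and without it the passage from $d(W_nx_n,x_n)\to0$ to $d(T_ix_n,x_n)\to0$ is unproved; you even flag it yourself as an ``expected obstacle,'' which is an admission that the key step is missing.

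Two smaller points. First, your justification of $F\subseteq C_n$ via ``nonexpansiveness of $W_n$'' is not available in a $\CAT(1)$ space: convex combinations $\alpha T\oplus(1-\alpha)I$ of nonexpansive maps are in general only \emph{quasi}nonexpansive there (Lemma \ref{quasi nonexpansive}), so the correct route is quasinonexpansiveness of $W_n$ together with $F\subseteq F(W_n)$ (Lemma \ref{FixedPoints}); the conclusion $d(W_nx_n,p)\le d(x_n,p)$ for $p\in F$ survives, but your stated reason does not. Second, your treatment of the case $\lim_n d(x_1,x_n)=\pi/2$ (``rule out or absorb'') is not an argument; the paper avoids it because its hypothesis is the strict bound $d(v,v')<\pi/2$, which gives $d(x_1,P_Fx_1)<\pi/2$ and hence $\lim_n\cos d(x_1,x_n)>0$, the quantity you need bounded away from zero both for $d(x_n,x_{n+1})\to 0$ and for the inductive estimate above. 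Also note that the projection inequality you cite as a known ``variational characterization'' ($z\in Q_{n+1}$ for $z\in C_n\cap Q_n$) is proved in the paper from Theorem \ref{pal} by a limiting argument along the geodesic $tz\oplus(1-t)x_{n+1}$; citing it is acceptable, but be aware it is part of what must be supplied in this setting.
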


\begin{theorem}[Theorem \ref{Shrinking}]
Let $C$ be a closed convex subset in real Hilbert shpere $S_H$ such that $d(v,v')<\pi/2$ for every $v,v'\in C$. Let $\alpha_{n,1},a_{n,2},\ldots,\alpha_{n,r}$ be real numbers for $n\in\mathbb{N}$ such that $\alpha_{n,i}\in[a,1-a]$ for every $i=1,2,\ldots,r$ where $0<a<1/2$, and let $T_1,T_2,\ldots,T_r$ be a finite number of nonexpansive mappings of C into itself such that $F:=\bigcap_{i=1}^rF(T_i)\neq\emptyset$. Let $W_n$ be the W-mappings of $X$ into itself generated by $T_1,T_2,\ldots,T_r$ and $\alpha_{n,1},\alpha_{n,2},\ldots,\alpha_{n,r}$ for $n\in\mathbb{N}$. For a given point $x_1\in C$, let $\{x_n\}$ be a sequence in $C$ generated by
\begin{align*}
x_1 &:=x\in C, \\
y_n &:=W_nx_n, \\
C_n &:=\{z\in C\ |\ d(y_n,z)\leq d(x_n,z)\}\cap C_{n-1}, \\
x_{n+1} &:=P_{C_n}x_1,
\end{align*}
for all $n\in\mathbb{N}$. Then $\{x_n\}$ is well defined and convergent to $P_Fx_1$.
\end{theorem}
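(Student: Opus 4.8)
The plan is to run the by-now-standard three-part argument for shrinking projection methods — well-definedness of the iterates, then that they form a Cauchy sequence, then identification of the limit — but with the Hilbert-space half-spaces replaced by their spherical analogues in $S_H$. Throughout write $C_0:=C$, so that $C_n=\{z\in C\mid d(y_n,z)\le d(x_n,z)\}\cap C_{n-1}$ for all $n\ge 1$. For well-definedness, first I would show by induction that $F\subseteq C_n$ for every $n$: this holds for $n=0$, and if $F\subseteq C_{n-1}$ then for $z\in F$ we have $W_nz=z$ (since $F\subseteq F(W_n)$), hence $d(y_n,z)=d(W_nx_n,W_nz)\le d(x_n,z)$ by nonexpansiveness of $W_n$, so $z\in C_n$. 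Consequently each $C_n$ is nonempty; it is also closed and convex, being the intersection of $C_{n-1}$ with the sublevel set $\{z\in C\mid d(y_n,z)\le d(x_n,z)\}$, which is convex in this setting. Since $S_H$ is a complete $\CAT(1)$ space and $d(v,v')<\pi/2$ on $C$, the metric projection $P_{C_n}$ onto a nonempty closed convex subset is well defined and single-valued, so $x_{n+1}$ is well defined.

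Next, to see that $\{x_n\}$ is Cauchy: from $C_n\subseteq C_{n-1}$, projecting $x_1$ onto the smaller set cannot bring it closer, so $\{d(x_1,x_n)\}$ is nondecreasing; it is bounded above by $d(x_1,P_Fx_1)$ because $P_Fx_1\in F\subseteq C_{n-1}$, hence $d(x_1,x_n)\uparrow\ell$ with $\ell\le d(x_1,P_Fx_1)<\pi/2$. I would then invoke the spherical variational characterization of the metric projection in $\CAT(1)$ spaces — precisely the inequality defining $Q_n$ in Theorem \ref{CQ} — which for $x_{n+1}=P_{C_n}x_1$ and any $z\in C_n$ gives
\[
\cos d(x_1,x_{n+1})\,\cos d(x_{n+1},z)\ \ge\ \cos d(x_1,z).
\]
Taking $z=x_m$ for $m\ge n+1$ (legitimate since $x_m\in C_{m-1}\subseteq C_n$), and using that all the distances lie in $[0,\pi/2)$ so that $\cos d(x_1,x_m)$ and $\cos d(x_1,x_{n+1})$ are positive and both converge to $\cos\ell>0$, we obtain
\[
\cos d(x_{n+1},x_m)\ \ge\ \frac{\cos d(x_1,x_m)}{\cos d(x_1,x_{n+1})}\ \longrightarrow\ 1,
\]
so $d(x_{n+1},x_m)\to 0$. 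Thus $\{x_n\}$ is Cauchy and, $C$ being a closed subset of the complete space $S_H$, it converges to some $u\in C$.

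It remains to show $u=P_Fx_1$. Since $x_{n+1}\in C_n\subseteq\{z\mid d(y_n,z)\le d(x_n,z)\}$ we get $d(y_n,x_{n+1})\le d(x_n,x_{n+1})$, and because $x_n,x_{n+1}\to u$ this forces $d(W_nx_n,x_n)=d(y_n,x_n)\to 0$. From here the argument is the same as the corresponding step in the proof of Theorem \ref{CQ}: using the $\CAT(1)$ comparison along the chain of partial compositions $U_{n,r+1}=\mathrm{id},U_{n,r},\dots,U_{n,1}=W_n$ defining $W_n$ — namely the identity $\cos d(\alpha a\oplus(1-\alpha)b,\,p)\,\sin d(a,b)=\sin(\alpha d(a,b))\cos d(a,p)+\sin((1-\alpha)d(a,b))\cos d(b,p)$ in $S_H$ with $p\in F$ — together with $\alpha_{n,i}\in[a,1-a]$ and boundedness of $\{x_n\}$, one extracts $d(T_ix_n,x_n)\to 0$ for each $i$; then $d(T_iu,u)\le 2d(u,x_n)+d(T_ix_n,x_n)\to 0$, so $T_iu=u$ and $u\in F$. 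Finally $F\subseteq C_n$ gives $d(x_1,x_{n+1})\le d(x_1,P_Fx_1)$ for all $n$, so in the limit $d(x_1,u)\le d(x_1,P_Fx_1)$; since $u\in F$ the reverse inequality also holds, and uniqueness of the metric projection onto the nonempty closed convex set $F$ yields $u=P_Fx_1$.

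The routine parts are the first two steps, which use only standard $\CAT(1)$ facts: convexity of the sublevel set $\{d(y_n,\cdot)\le d(x_n,\cdot)\}$ and the existence, uniqueness and variational inequality for metric projections onto closed convex subsets of $C$. The genuine obstacle is the implication $d(W_nx_n,x_n)\to 0\Rightarrow u\in F$ in the last step: because the coefficients $\alpha_{n,i}$ vary with $n$, one cannot simply appeal to demiclosedness of a fixed mapping and must instead obtain quantitative control of each $d(T_ix_n,x_n)$ in terms of $d(W_nx_n,x_n)$ from the spherical comparison inequalities, crucially exploiting $\alpha_{n,i}\in[a,1-a]$ and $d(v,v')<\pi/2$ on $C$ to keep every trigonometric factor away from its degenerate value. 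If the earlier sections isolate this as a lemma on $W$-mappings — as they presumably do for Theorem \ref{CQ} — the step reduces to a direct citation.
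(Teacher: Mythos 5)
Your argument is correct in outline and reaches the right conclusion, but its middle step takes a genuinely different route from the paper. The paper never shows directly that $\{x_n\}$ is Cauchy: it takes an arbitrary subsequence, uses $\sup_n d(x_1,x_n)<\pi/2$ and Lemma \ref{subseq} to extract a $\Delta$-convergent subsequence, places the $\Delta$-limit in $\bigcap_k C_k$ by Lemma \ref{weakclosed}, and then combines Lemmas \ref{conv} and \ref{weak} with the monotonicity of $d(x_1,P_{C_n}x_1)$ to conclude that every subsequence has a further subsequence converging strongly to $P_{\bigcap_k C_k}x_1$; only afterwards does it obtain $d(W_nx_n,x_n)\to 0$ (from $d(W_nx_n,P_{\bigcap_k C_k}x_1)\le d(x_n,P_{\bigcap_k C_k}x_1)\to 0$), run the Theorem \ref{pal}/Lemma \ref{sin} chain from Theorem \ref{CQ} to get $d(T_ix_n,x_n)\to 0$, and finally identify $P_{\bigcap_k C_k}x_1=P_Fx_1$. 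You instead get Cauchyness directly from the projection inequality $\cos d(x_1,x_{n+1})\cos d(x_{n+1},z)\ge\cos d(x_1,z)$ for $z\in C_n$, which avoids the $\Delta$-convergence toolbox entirely and is arguably more elementary; the price is that this variational characterization of $P_{C_n}$ is not stated as a lemma in the paper (it is essentially proved inside Theorem \ref{CQ}, in the induction showing $F\subset Q_{k+1}$, via Theorem \ref{pal} and a limiting argument, and is a known Kimura--Sat\^o fact), so you must prove or explicitly quote it. Your endgame ($x_{n+1}\in C_n$ gives $d(W_nx_n,x_n)\to 0$, then the same chain as in Theorem \ref{CQ} yields $d(T_ix_n,x_n)\to 0$, hence $u\in F$ and $u=P_Fx_1$) parallels the paper, which likewise defers that chain to the proof of Theorem \ref{CQ}; note the paper does not isolate it as a citable lemma, and the chain also needs $\inf_n\cos d(x_n,z)>0$ for some $z\in F$, Lemma \ref{sin}, and a subsequence argument on the $\alpha_{n,i}$, so in a final write-up it has to be reproduced or referenced as ``as in the proof of Theorem \ref{CQ}'' rather than as a one-line citation.

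One genuine misstep to fix: in the well-definedness step you justify $d(y_n,z)\le d(x_n,z)$ for $z\in F$ by ``nonexpansiveness of $W_n$.'' On a CAT(1) space, and already on the Hilbert sphere, the combination $\alpha T\oplus(1-\alpha)I$ of a nonexpansive mapping need not be nonexpansive, so $W_n$ (built from such combinations and compositions) cannot be asserted to be nonexpansive; this is precisely why the paper only uses quasinonexpansiveness (Lemma \ref{quasi nonexpansive}) together with $F(W_n)=\bigcap_{i=1}^rF(T_i)$ (Lemma \ref{FixedPoints}) to obtain exactly the inequality you need. The needed fact is true and one line away, but your stated reason is wrong in this setting and should be replaced by the quasinonexpansiveness argument. (Minor point: the formula you quote for $\cos d(\alpha a\oplus(1-\alpha)b,p)$ does hold with equality on $S_H$, but in general CAT(1) spaces only the inequality of Theorem \ref{pal} is available; the inequality suffices for the argument.)
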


It is essential to show that $\lim_{n\to\infty}d(T_ix_n,x_n)=0$ for every $i=1,2,\ldots,r$ for both theorems. For that we used the Theorem \ref{pal}.

\section{Preliminaries}\label{sec:Preliminaries}

Let $(X,d)$ be a metric space. For $x,y\in X$, a map $c:[0,l]\to X$ is called a geodesic if it satisfies $c(0)=x,c(l)=y$ and $d(c(s),c(t))=|s-t|$ for all $s,t\in [0,l]$. If a geodesic is unique for every $x,y\in X$, we denote the image $c([0,l])$ of $c$ by $[x,y]$ and call it the geodesic segment joining $x$ to $y$. If $[x,y]$ is a unique geodesic segment joining $x$ to $y$ for any $x,y\in X$, then we call $X$ a geodesic metric space.

Let $X$ be a geodesic metric space. For all $x,y,z\in X$, set $\bigtriangleup(x,y,z)=[x,y]\cup[y,z]\cup[z,x]$ and call it a geodesic triangle. For any geodesic triangle $\bigtriangleup(x,y,z)$ satisfying $d(x,y)+d(y,z)+d(z,x)<2\pi$, there exists a spherical triangle $\overline{\bigtriangleup}(\overline{x},\overline{y},\overline{z})$ in $\mathbb{S}^2$ such that each corresponding edge has the same length as that of the original triangle, where $\mathbb{S}^2$ is a unit sphere in a Euclidena space $\mathbb{R}^3$. $\overline{\bigtriangleup}(\overline{x},\overline{y},\overline{z})$ is called the comparison triangle of $\bigtriangleup(x,y,z)$. For $p,q\in\bigtriangleup(x,y,z)$, there exist comparison points $\overline{p},\overline{q}\in \overline{\bigtriangleup}(\overline{x},\overline{y},\overline{z})$. If the inequality
\[
d(p,q)\leq d_{\mathbb{S}^2}(\overline{p},\overline{q})
\]
holds for all $p,q\in \bigtriangleup(x,y,z)$, where $d_{\mathbb{S}^2}$ is the spherical metric on $\mathbb{S}^2$, then we call $X$ a ${\rm CAT(1)}$ space. If $z\in[x,y]$ satisfies $d(y,z)=\alpha d(x,y)$ and $d(x,z)=(1-\alpha)d(x,y)$, we denote $z=\alpha x\oplus(1-\alpha)y$. The following Hilbert sphere is the most important example of CAT(1) space in this paper.

\begin{definition}\normalfont
Let $(H,\langle\cdot,\cdot\rangle)$ be a real Hilbert space and $\|\cdot\|$ be its norm. The real Hilbert sphere $S_H$ is defined by $S_H:=\{ x\in H\ |\ \|x\|=1\}$ and we can define the metric function of $S_H$ by $d(x,y):=\arccos\langle x,y\rangle$.
\end{definition}

Then $S_H$ is an example of complete CAT(1) space, and thus a nonempty closed convex subset of $S_H$ is a complete CAT(1) space (see \cite{Bridson-Haefliger}). It is known that a hemisphere $\{z\in S_H\  |\ d(x,z)\leq d(y,z)\}$ and a subset $\{z\in S_H\ |\ \cos d(x,y)\cos d(y,z)\geq\cos d(x,z)\}$ are closed convex subsets of $S_H$.

\begin{theorem}[Kimura-Sat\^o \cite{KS2}]\label{pal}
Let $x,y,z$ be points in a {\rm CAT(1)} space such that $d(x,y)+d(y,z)+d(z,x)<2\pi$. Let $v:=tx\oplus(1-t)y$ for some $t\in[0,1]$. Then
\[
\cos d(v,z)\sin d(x,y)\geq \cos d(x,z)\sin(td(x,y)) + \cos d(y, z)\sin((1-t)d(x, y)).
\]
\end{theorem}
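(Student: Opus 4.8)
The plan is to reduce the $\CAT(1)$ statement to an equality on the model sphere $\mathbb{S}^2$ and then verify that equality by an elementary trigonometric computation. First I would dispose of the degenerate case: if $d(x,y)=0$ then $v=x=y$, both sides equal $\cos d(x,z)\cdot 0$ after interpreting $\sin 0=0$... actually one should treat $d(x,y)=0$ separately, noting the inequality reads $0\ge 0$, and if $d(x,z)+d(y,z)+d(x,y)<2\pi$ with $d(x,y)>0$ we are in the genuine case. So assume $d(x,y)>0$ and set $\ell:=d(x,y)$, so that $d(x,v)=(1-t)\ell$ and $d(v,y)=t\ell$ by definition of $v=tx\oplus(1-t)y$.

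The key step is the comparison. Since $d(x,y)+d(y,z)+d(z,x)<2\pi$, the geodesic triangle $\triangle(x,y,z)$ has a comparison triangle $\overline{\triangle}(\overline x,\overline y,\overline z)$ in $\mathbb{S}^2$ with the same side lengths. The point $v$ lies on the edge $[x,y]$, so it has a comparison point $\overline v\in[\overline x,\overline y]$ with $d_{\mathbb{S}^2}(\overline x,\overline v)=(1-t)\ell$ and $d_{\mathbb{S}^2}(\overline v,\overline y)=t\ell$. The $\CAT(1)$ inequality gives $d(v,z)\le d_{\mathbb{S}^2}(\overline v,\overline z)$, hence $\cos d(v,z)\ge\cos d_{\mathbb{S}^2}(\overline v,\overline z)$ because $d_{\mathbb{S}^2}(\overline v,\overline z)\le d(x,z)+d(x,v)< 2\pi$ and in fact one checks it is $\le\pi$, so cosine is order-reversing on the relevant range. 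Thus it suffices to prove the inequality (in fact equality) on $\mathbb{S}^2$, i.e. to show
\[
\cos d_{\mathbb{S}^2}(\overline v,\overline z)\,\sin\ell \;=\; \cos d_{\mathbb{S}^2}(\overline x,\overline z)\,\sin((1-t)\ell)+\cos d_{\mathbb{S}^2}(\overline y,\overline z)\,\sin(t\ell).
\]
Wait — I should be careful about which term carries $t$ and which carries $1-t$; since $d(v,y)=t\ell$ the spherical law of cosines for the angle at... I would instead avoid angles entirely and use the linear-algebra model of $\mathbb{S}^2\subset\mathbb{R}^3$: write $\overline x,\overline y,\overline z$ as unit vectors, parametrise the geodesic from $\overline x$ to $\overline y$ by $\gamma(s)=\frac{\sin(\ell-s)}{\sin\ell}\,\overline x+\frac{\sin s}{\sin\ell}\,\overline y$ for $s\in[0,\ell]$ (this is the standard constant-speed geodesic, which one verifies has $\|\gamma(s)\|=1$ and $\langle\overline x,\gamma(s)\rangle=\cos s$), and note $\overline v=\gamma((1-t)\ell)$. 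Then $\cos d_{\mathbb{S}^2}(\overline v,\overline z)=\langle\gamma((1-t)\ell),\overline z\rangle=\frac{\sin(t\ell)}{\sin\ell}\langle\overline x,\overline z\rangle+\frac{\sin((1-t)\ell)}{\sin\ell}\langle\overline y,\overline z\rangle$, which after multiplying by $\sin\ell$ and using $\langle\overline x,\overline z\rangle=\cos d(x,z)$, $\langle\overline y,\overline z\rangle=\cos d(y,z)$ is exactly the claimed identity with the $\sin(t\ell)$ attached to the $\overline x$-term. Comparing with the statement, this means the roles match once one identifies $v=tx\oplus(1-t)y$ as the point at distance $t\ell$ from $y$, consistent with the $\sin((1-t)\ell)$ coefficient on $\cos d(x,z)$.

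The only genuine obstacle is the bookkeeping around ranges: one must check that $d_{\mathbb{S}^2}(\overline v,\overline z)\in[0,\pi]$ so that the passage from the distance inequality to the cosine inequality is valid, and that $\sin\ell>0$ so dividing/multiplying preserves the inequality — both follow from $\ell=d(x,y)<\pi$ (a consequence of the perimeter bound together with $d(y,z),d(z,x)\ge 0$, or more carefully from the hypothesis) and from the triangle inequality on $\mathbb{S}^2$. Everything else is the routine verification that $\gamma$ as written is the unit-speed geodesic on the sphere, which is a standard fact about $\mathbb{S}^2$ that I would cite from \cite{Bridson-Haefliger} rather than reprove. So the proof is: (i) handle $d(x,y)=0$; (ii) pass to the comparison triangle in $\mathbb{S}^2$ and reduce to an equality there; (iii) prove that equality by writing the spherical geodesic explicitly in $\mathbb{R}^3$ and taking an inner product with $\overline z$.
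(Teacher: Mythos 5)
Your argument is correct, but note that the paper itself gives no proof of this statement: it is quoted from Kimura--Sat\^o \cite{KS2}, so there is nothing internal to compare against. What you propose is the standard (and presumably the original) route: pass to the comparison triangle in $\mathbb{S}^2$, use the $\CAT(1)$ inequality $d(v,z)\le d_{\mathbb{S}^2}(\overline v,\overline z)$ together with the fact that both distances lie in $[0,\pi]$ where cosine is decreasing, and then verify the corresponding \emph{equality} on the sphere by writing the geodesic explicitly as $\gamma(s)=\frac{\sin(\ell-s)}{\sin\ell}\overline x+\frac{\sin s}{\sin\ell}\overline y$ and pairing with $\overline z$; your inner-product computation does yield exactly $\cos d_{\mathbb{S}^2}(\overline v,\overline z)\sin\ell=\cos d(x,z)\sin(t\ell)+\cos d(y,z)\sin((1-t)\ell)$, matching the statement under the paper's convention $d(y,v)=t\,d(x,y)$, $d(x,v)=(1-t)\,d(x,y)$.

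Two small repairs. First, your closing sentence asserts the statement carries ``the $\sin((1-t)\ell)$ coefficient on $\cos d(x,z)$''; it does not --- the coefficient on $\cos d(x,z)$ is $\sin(t\ell)$, which is precisely what your computation produced, so delete or correct that remark (and the earlier tentative display with the swapped coefficients). Second, the bound $\ell=d(x,y)<\pi$, needed both for $\sin\ell>0$ and for the existence/parametrization of the comparison geodesic, does not follow merely from the perimeter bound plus nonnegativity of the other two sides; it follows from the triangle inequality, since $2d(x,y)\le d(x,y)+d(x,z)+d(z,y)<2\pi$. With these wording fixes, and the degenerate case $d(x,y)=0$ handled as you indicate (both sides vanish), the proof is complete.
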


Let $X$ be a complete CAT(1) space such that $d(v,v')<\pi/2$ for all $v,v'\in X$, 
and let $C$ be a nonempty closed convex subset of $X$. 
Then for any $x \in X$, there exists a unique point $P_C x \in C$ such that
\[
 d(x,P_C x)=\inf_{y\in C}d(x,y).
\]
We call $P_C x$ the metric projection of $x \in X$. By definition, the metric projection $P_Cx$ is the nearest point of $C$ to a given $x \in X$. 

Let $X$ be a metric space and $\{x_n\}$ be a bounded sequence of $X$. 
The asymptotic center $AC(\{x_n\})$ of $\{x_n\}$ is defined by
\[
 AC(\{x_n\}):=\left\{ z \, \Big| \, 
 \limsup_{n \to \infty} d(z,x_n) = \inf_{x \in X}\limsup_{n\to\infty}d(x,x_n)\right\}.
\]
We say that $\{x_n\}$ is $\Delta$-convergent to a point $z \in X$ 
if for all subsequences $\{x_{n_i}\}$ of $\{x_n\}$, its asymptotic center consists only of $z$, that is, $AC(\{x_{n_i}\})=\{z\}$. 

Let $X$ be a metric space and $T$ be a mapping of $X$ into itself. Hereafter we denote by
\[
 F(T):=\{z \mid T z=z\}.
\]
the fixed point set of $T$. The mapping $T$ is called
\begin{itemize}
\item
nonexpansive
if $d(Tx,Ty)\leq d(x,y)$ for all $x,y\in X$,
\item
and $T$ is said to be quasinonexpansive if $d(Tx, p)\leq d(x, p)$ for all $x\in X$ and $p\in F(T)$. Using similar techniques to the case of Hilbert space, 
we can prove that $F(T)$ is a closed convex subset of $X$.
\item
We also say that $T$ is $\Delta$-demiclosed if for any $\Delta$-convergent sequence $\{x_n\}$ in $X$, its $\Delta$-limit belongs to $F(T)$ whenever $\lim_{n\to\infty}d(Tx_n,x_n)=0$.
\end{itemize}

The notation of $W$-mapping is originally proposed by Takahashi. We use the same notation in the setting of geodesic space as following: 

\begin{definition}[Takahashi-Shimoji \cite{Takahashi-Simoji}]\label{Wmapping}
Let $X$ be a geodesic metric space. Let $T_1,T_2,\ldots,T_r$ be a finite number of mappings of $X$ into itself and $\alpha_1,\alpha_2,\ldots,\alpha_r$ be real numbers such that $0\leq \alpha_i\leq 1$ for every $i=1,2,\ldots,r$. Then we define a mapping $W$ of $X$ into itself inductively as
\begin{align*}
U_1 &:=  \alpha_1T_1\oplus(1-\alpha_1)I,  \\
U_2 &:= \alpha_2T_2U_1\oplus(1-\alpha_2)I,  \\
 &\cdots& \\
U_r &:= \alpha_rT_rU_{r-1}\oplus(1-\alpha_r)I, \\
W &:= U_r.
\end{align*}
We obtained mapping $W:=U_r$ is called the $W$-mapping generated by $T_1,T_2,\ldots,T_r$ and $\alpha_1,\alpha_2,\ldots,\alpha_r$.
\end{definition}

The following lemmas are important for our main result.

\begin{lemma}[Kimura-Sat\^o \cite{Kimura-Sato}]\label{quasi nonexpansive}
Let $T$ be a nonexpansive mapping defined on a $\CAT(1)$ space. For any real number $\alpha\in[0,1]$, the mapping $\alpha T\oplus(1-\alpha)I$ is quasinonexpansive.
\end{lemma}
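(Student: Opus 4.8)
The plan is to reduce to the generic case and then apply the Kimura-Sat\^o inequality (Theorem \ref{pal}). Write $S := \alpha T \oplus (1-\alpha)I$. For $\alpha = 0$ we have $S = I$ and for $\alpha = 1$ we have $S = T$, and in either case $S$ is nonexpansive, hence quasinonexpansive; so assume $\alpha \in (0,1)$. First I would identify the fixed point set: if $Sp = p$ then $p$ is the point of the geodesic $[Tp, p]$ lying at distance $\alpha\,d(Tp,p)$ from $p$, which forces $d(Tp, p) = 0$, i.e.\ $p \in F(T)$; since conversely $F(T) \subseteq F(S)$ trivially, $F(S) = F(T)$. It therefore suffices to show $d(Sx, p) \le d(x, p)$ for arbitrary $x \in X$ and $p \in F(T)$. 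If $Tx = x$ then $Sx = x$ and we are done, so set $\ell := d(Tx, x) \in (0,\pi)$.

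Next, apply Theorem \ref{pal} to the triangle with vertices $Tx, x, p$ and interpolation parameter $t = \alpha$, so that $t(Tx) \oplus (1-t)x = Sx$:
\[
\cos d(Sx, p)\,\sin\ell \;\ge\; \cos d(Tx, p)\,\sin(\alpha\ell) + \cos d(x, p)\,\sin\big((1-\alpha)\ell\big).
\]
Because $T$ is nonexpansive and $Tp = p$, we have $d(Tx, p) = d(Tx, Tp) \le d(x, p)$, so $\cos d(Tx, p) \ge \cos d(x, p)$; combining this with $\sin(\alpha\ell) \ge 0$, the right-hand side is bounded below by $\cos d(x,p)\big(\sin(\alpha\ell) + \sin((1-\alpha)\ell)\big)$. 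I would then finish with the elementary identity
\[
\sin(\alpha\ell) + \sin\big((1-\alpha)\ell\big) = 2\sin\tfrac{\ell}{2}\cos\big((\alpha-\tfrac12)\ell\big) \ge 2\sin\tfrac{\ell}{2}\cos\tfrac{\ell}{2} = \sin\ell,
\]
which holds since $|\alpha - \tfrac12| \le \tfrac12$ and $0 \le \tfrac{\ell}{2} \le \tfrac{\pi}{2}$; this gives $\cos d(Sx, p)\,\sin\ell \ge \cos d(x,p)\,\sin\ell$, and dividing by $\sin\ell > 0$ yields $\cos d(Sx, p) \ge \cos d(x, p)$, i.e.\ $d(Sx, p) \le d(x, p)$.

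The step requiring genuine care — and the one I expect to be the main obstacle — is passing from $\cos d(x,p)\big(\sin(\alpha\ell)+\sin((1-\alpha)\ell)\big)$ to $\cos d(x,p)\,\sin\ell$: this is valid only when $\cos d(x,p) \ge 0$, i.e.\ $d(x,p) \le \pi/2$, which is exactly the radius up to which balls in a $\CAT(1)$ space are geodesically convex. The same threshold governs the hypotheses of Theorem \ref{pal} (one needs the perimeter $d(Tx,x) + d(x,p) + d(p,Tx) < 2\pi$, and $\ell < \pi$ so that $\sin\ell > 0$). All of this is automatic once one works inside a set of diameter at most $\pi/2$, which is precisely the setting in which the lemma is used in this paper; to cover a general $\CAT(1)$ space one would additionally need a separate argument for $x$ with $d(x,p) > \pi/2$, starting from Theorem \ref{pal} but keeping the $\cos d(Tx,p)$ term instead of bounding it crudely below. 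Apart from that, the verification is routine trigonometry.
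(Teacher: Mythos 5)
The paper offers no proof of this lemma at all --- it is imported verbatim from Kimura--Sat\^o \cite{Kimura-Sato} --- so there is no internal argument to compare against; judged on its own, your derivation is correct and is essentially the standard one: identify $F(\alpha T\oplus(1-\alpha)I)=F(T)$, apply Theorem \ref{pal} to the triangle with vertices $Tx,x,p$ and $t=\alpha$, replace $\cos d(Tx,p)$ by $\cos d(x,p)$ using nonexpansiveness, and conclude via $\sin(\alpha\ell)+\sin((1-\alpha)\ell)\ge\sin\ell$. The caveat you flag is, however, not a removable technicality but an essential hypothesis: as literally stated for an arbitrary $\CAT(1)$ space the lemma is false. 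On the unit sphere $\mathbb{S}^2$, let $T$ be a rotation about the polar axis through an angle in $(0,\pi)$ and let $p$ be the north pole, so $p\in F(T)$; for any $x$ strictly below the equator the midpoint $\frac12 Tx\oplus\frac12 x$ lies strictly farther from $p$ than $x$ does, so $\frac12 T\oplus\frac12 I$ is not quasinonexpansive. The original Kimura--Sat\^o result carries a standing admissibility assumption (all distances less than $\pi/2$), and in the present paper the lemma is only ever invoked on the set $C$ with $d(v,v')<\pi/2$, so your argument covers every use made of it here; but your closing suggestion that the case $d(x,p)>\pi/2$ might be recovered by retaining the $\cos d(Tx,p)$ term cannot succeed in general, as the example shows --- the correct fix is to add the diameter hypothesis to the statement. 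The remaining details in your write-up (the cases $\alpha\in\{0,1\}$ and $Tx=x$, the perimeter condition needed for Theorem \ref{pal}, positivity of $\sin\ell$, and the identification $F(\alpha T\oplus(1-\alpha)I)=F(T)$) are all handled correctly.
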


\begin{lemma}[Kimura-Sat\^o \cite{Kimura-Sato}]\label{delta-demiclosed}
Let $T$ be a nonexpansive mapping defined on a {\rm CAT(1)} space. For any real number $\alpha\in(0,1]$, the mapping $\alpha T\oplus(1-\alpha)I$ is $\Delta$-demiclosed.
\end{lemma}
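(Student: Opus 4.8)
The plan is to reduce the $\Delta$-demiclosedness of $S:=\alpha T\oplus(1-\alpha)I$ to the classical demiclosedness principle for the nonexpansive map $T$, and then to establish the latter by an asymptotic-center argument. First I would record the elementary identity relating the two displacements. Writing $Sx=\alpha Tx\oplus(1-\alpha)x$ and applying the convention $d(y,z)=\alpha\,d(x,y)$ for $z=\alpha x\oplus(1-\alpha)y$ with the roles played by $Tx$ and $x$, one obtains
\[
 d(x,Sx)=\alpha\,d(Tx,x)\qquad\text{for every }x.
\]
Since $\alpha\in(0,1]$ is a fixed positive number, this identity gives $Sx=x\iff Tx=x$, so that $F(S)=F(T)$; more importantly, for the sequence at hand it yields $d(Tx_n,x_n)=\alpha^{-1}d(Sx_n,x_n)\to 0$ from the hypothesis $d(Sx_n,x_n)\to 0$. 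Thus it suffices to prove that the $\Delta$-limit $z$ of $\{x_n\}$ satisfies $Tz=z$, because then $z\in F(T)=F(S)$, which is exactly the conclusion required for $S$.

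Next I would set up the asymptotic-center machinery. Put $r(x):=\limsup_{n\to\infty}d(x,x_n)$. Since $\{x_n\}$ is $\Delta$-convergent to $z$, applying the definition to the full sequence (as a subsequence of itself) gives $AC(\{x_n\})=\{z\}$; that is, $z$ is the unique minimizer of $r$ over $X$. The key estimate then comes from the triangle inequality together with the nonexpansiveness of $T$: for every $n$,
\[
 d(Tz,x_n)\le d(Tz,Tx_n)+d(Tx_n,x_n)\le d(z,x_n)+d(Tx_n,x_n).
\]
Taking $\limsup$ and using $d(Tx_n,x_n)\to 0$ yields $r(Tz)\le r(z)$. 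Because $z$ is the unique minimizer of $r$, we have $r(z)=\inf_{x\in X}r(x)\le r(Tz)\le r(z)$, forcing $r(Tz)=\inf_{x\in X}r(x)$. Hence $Tz\in AC(\{x_n\})=\{z\}$, i.e. $Tz=z$, completing the reduction carried out in the first step.

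The one point that genuinely requires care is the uniqueness of the asymptotic center, which in a $\CAT(1)$ space is not automatic and relies on the curvature bound (through convexity of the relevant functionals, or equivalently a smallness assumption on the radius of $\{x_n\}$). In the present formulation, however, this uniqueness is already encoded in the hypothesis: $\Delta$-convergence to $z$ was \emph{defined} by requiring every subsequential asymptotic center to equal $\{z\}$, so I may invoke $AC(\{x_n\})=\{z\}$ directly. Consequently the substantive content of the argument is the two-line estimate above, and the proof uses only the definition of $\oplus$ and the nonexpansiveness of $T$; neither Theorem~\ref{pal} nor Lemma~\ref{quasi nonexpansive} is needed here.
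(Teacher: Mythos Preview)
The paper does not give its own proof of this lemma; it is stated with attribution to Kimura--Sat\^o \cite{Kimura-Sato} and used as a black box. Your argument is correct and is precisely the standard proof: the identity $d(x,Sx)=\alpha\,d(Tx,x)$ reduces the question to the $\Delta$-demiclosedness of $T$ itself, and the asymptotic-center estimate $r(Tz)\le r(z)$ together with the uniqueness encoded in the definition of $\Delta$-convergence finishes the job. Incidentally, this very computation appears verbatim in the proof of Theorem~\ref{CQ} (applied there to each $T_i$), so your approach matches exactly how the paper handles the underlying idea when it does spell it out.
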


\begin{lemma}[Esp\'\i nola-Fern\'andez-Le\'on \cite{Espinola-Fernandez-Leon}]\label{subseq}
Let $X$ be a complete {\rm CAT(1)} space, and $\{x_n\}$ be a sequence in $X$. If there exists $x\in X$ such that $\limsup_{n\to\infty}d(x_n,x)<\pi/2$, then $\{x_n\}$ has a $\Delta$-convergent subsequence.
\end{lemma}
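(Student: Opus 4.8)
The plan is to adapt the classical Kirk--Panyanak argument for $\Delta$-convergence to the bounded-radius regime forced by positive curvature. Since the asymptotic centre of a sequence depends only on its tail, I may discard finitely many terms and assume from the start that $d(x_n,x)\le r_0$ for all $n$, with $r_0<\pi/2$ fixed. Then the asymptotic radius $r:=\inf_{y\in X}\limsup_{n\to\infty}d(y,x_n)$ satisfies $r\le r_0<\pi/2$, and any near-minimiser $y$ of the ($1$-Lipschitz) function $f(y):=\limsup_{n\to\infty}d(y,x_n)$ again satisfies $f(y)<\pi/2$. The same is true for every subsequence, since passing to a subsequence can only decrease $\limsup$'s, hence the asymptotic radius.

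First I would extract a uniform-convexity estimate from Theorem \ref{pal}. Putting $t=1/2$ there, if $m$ denotes the midpoint of a geodesic $[p,q]$ and $z$ is any point with $d(p,z)+d(q,z)+d(p,q)<2\pi$, then from $\sin\theta=2\sin(\theta/2)\cos(\theta/2)$ one obtains
\[
\cos d(m,z)\ \ge\ \frac{\cos d(p,z)+\cos d(q,z)}{2\cos\!\bigl(d(p,q)/2\bigr)} .
\]
Hence if $d(p,z),d(q,z)\le\rho<\pi/2$ and $d(p,q)\ge\varepsilon$, then $\cos d(m,z)\ge\cos\rho/\cos(\varepsilon/2)$, so $d(m,z)\le\arccos\!\bigl(\cos\rho/\cos(\varepsilon/2)\bigr)<\rho$; this is the CAT(1) analogue of uniform convexity of balls of radius $<\pi/2$.

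Using this estimate I would prove that $AC(\{x_n\})$ is a single point, and likewise for every subsequence of $\{x_n\}$. For existence, take a minimising sequence $\{y_k\}$ for $f$; applying the midpoint inequality with $z=x_n$ and then taking $\limsup$ in $n$ bounds $f$ of the midpoint of $y_k,y_l$ above by $\arccos\!\bigl((\cos f(y_k)+\cos f(y_l))/(2\cos(d(y_k,y_l)/2))\bigr)$, which, if $d(y_k,y_l)$ stayed bounded away from $0$, would eventually fall strictly below $r=\inf f$ --- impossible. So $\{y_k\}$ is Cauchy, its limit is a minimiser by completeness, and the same midpoint argument shows two minimisers must coincide. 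The bound $r<\pi/2$ is exactly what makes the estimate strict, and it persists for subsequences, so every subsequence of $\{x_n\}$ has a singleton asymptotic centre.

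It remains to produce a $\Delta$-convergent subsequence. Using that the asymptotic radius of a subsequence never exceeds that of the parent sequence, a standard diagonal argument over subsequences whose asymptotic radii decrease to the infimum yields a subsequence $\{u_n\}$ that is regular, i.e.\ $r(\{u_{n_j}\})=r(\{u_n\})=:\rho_0$ for every subsequence $\{u_{n_j}\}$. Write $AC(\{u_n\})=\{u\}$. For a subsequence $\{u_{n_j}\}$ with $AC(\{u_{n_j}\})=\{u'\}$, regularity gives $\limsup_j d(u',u_{n_j})=\rho_0$, while $\rho_0\le\limsup_j d(u,u_{n_j})\le\limsup_n d(u,u_n)=\rho_0$, so $u$ also realises the asymptotic radius of $\{u_{n_j}\}$; uniqueness then forces $u'=u$. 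Thus $AC(\{u_{n_j}\})=\{u\}$ for every subsequence, which is precisely $\Delta$-convergence of $\{u_n\}$ to $u$. The main obstacle is the third step, establishing existence and uniqueness of asymptotic centres: one must carry the constraint ``all relevant distances $<\pi/2$'' through both the minimisation and the subsequence extractions, since the midpoint estimate degenerates as distances approach $\pi/2$. Once that is in place, the remaining arguments are purely formal.
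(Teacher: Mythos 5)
The paper does not prove this lemma at all; it is imported verbatim from Esp\'\i nola--Fern\'andez-Le\'on \cite{Espinola-Fernandez-Leon}, so there is no internal proof to compare against. Your argument is, in essence, the standard proof of that cited result (the Kirk--Panyanak/Goebel scheme adapted to curvature $1$), and I find it correct as a sketch: the midpoint estimate you extract from Theorem \ref{pal} with $t=1/2$, namely $\cos d(m,z)\ge\bigl(\cos d(p,z)+\cos d(q,z)\bigr)/\bigl(2\cos(d(p,q)/2)\bigr)$, is valid whenever $d(p,q)<\pi$ and the perimeter condition holds, and it does yield existence and uniqueness of asymptotic centres once the asymptotic radius is below $\pi/2$; the regular-subsequence extraction and the final ``regular $+$ unique centres $\Rightarrow$ $\Delta$-convergence'' step are also sound and match the paper's definition of $\Delta$-convergence. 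Two small points deserve explicit care if you write this out in full: (i) when converting the cosine inequality into $f(m)<r$ you implicitly use that the relevant distances lie in $[0,\pi]$, where $\cos$ is monotone, and that geodesics (hence midpoints) between the $y_k$ exist; both follow from the tail bounds $f(y_k)<\pi/2$ and $d(y_k,y_l)<\pi$, but they should be stated; (ii) in the diagonal construction it is cleaner to choose, at stage $k$, a subsequence of the previous stage whose radius is within $1/k$ of the infimum of radii over subsequences of that stage (Goebel's lemma), rather than aiming directly at the global infimum; the regularity conclusion then follows exactly as you indicate. With these bookkeeping points added, your proof stands on its own, which is more than the paper attempts for this statement.
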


\begin{lemma}[He-Fang-Lopez-Li \cite{He-Fang-Lopez-Li}]\label{conv}
Let $X$ be a complete $\CAT(1)$ space and $p\in X$. Let $\{x_n\}$ be a sequence in $X$. Supposed that $\{x_n\}$ satisfies $\limsup_{n\to\infty} d(x_n,p)<\pi/2$ and that $\{x_n\}$ is $\Delta$-convergent to
$x\in X$. Then $d(x, p)\leq \liminf_{n\to\infty}d(x_n,p)$.
\end{lemma}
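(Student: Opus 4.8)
The plan is to establish this as the CAT(1) counterpart of the weak lower semicontinuity of the norm on a Hilbert space, with Theorem~\ref{pal} playing the role of the parallelogram law. First I would set $\ell:=\liminf_{n\to\infty}d(x_n,p)$ and choose a subsequence $\{x_{n_k}\}$ with $d(x_{n_k},p)\to\ell$. Since $\{x_n\}$ is $\Delta$-convergent to $x$, so is $\{x_{n_k}\}$, hence $AC(\{x_{n_k}\})=\{x\}$; I put $\rho:=\limsup_{k\to\infty}d(x,x_{n_k})$, which is the minimal value of $r(z):=\limsup_{k\to\infty}d(z,x_{n_k})$ and is attained only at $z=x$. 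Because $r(p)=\lim_{k\to\infty}d(p,x_{n_k})=\ell$, minimality gives $\rho\le\ell<\pi/2$, and then the triangle inequality gives $d(x,p)<\pi$; thus for large $k$ the perimeter bound $d(x,p)+d(p,x_{n_k})+d(x_{n_k},x)<2\pi$ of Theorem~\ref{pal} holds, and all distances in play stay in the range where $\cos$ is strictly decreasing.

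If $d(x,p)=0$ the claim is trivial, so assume $a:=d(x,p)\in(0,\pi)$. For small $\epsilon\in(0,1]$ I would set $z_\epsilon:=(1-\epsilon)x\oplus\epsilon p$, so that $d(x,z_\epsilon)=\epsilon a$ and $d(p,z_\epsilon)=(1-\epsilon)a$, and apply Theorem~\ref{pal} to the triangle with vertices $x,p,x_{n_k}$ and $v=z_\epsilon$:
\[
\cos d(z_\epsilon,x_{n_k})\,\sin a\;\ge\;\cos d(x,x_{n_k})\,\sin\bigl((1-\epsilon)a\bigr)+\cos d(p,x_{n_k})\,\sin(\epsilon a).
\]
Taking $\liminf_{k\to\infty}$ of both sides, and using that $\cos$ is decreasing so that $\liminf_k\cos d(x,x_{n_k})=\cos\rho$ and $\liminf_k\cos d(z_\epsilon,x_{n_k})=\cos r(z_\epsilon)$ while $\cos d(p,x_{n_k})\to\cos\ell$, I obtain
\[
\sin a\,\cos r(z_\epsilon)\;\ge\;\cos\rho\,\sin\bigl((1-\epsilon)a\bigr)+\cos\ell\,\sin(\epsilon a).
\]
Since $z_\epsilon\neq x$, uniqueness of the asymptotic center gives $r(z_\epsilon)\ge\rho$, hence $\cos r(z_\epsilon)\le\cos\rho$, and rearranging yields $\cos\rho\,\bigl(\sin a-\sin((1-\epsilon)a)\bigr)\ge\cos\ell\,\sin(\epsilon a)$ for all such $\epsilon$.

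To finish I would expand $\sin a-\sin((1-\epsilon)a)=\sin a\,(1-\cos\epsilon a)+\cos a\,\sin(\epsilon a)$, divide by $\sin(\epsilon a)>0$, and let $\epsilon\downarrow0$; since $(1-\cos\epsilon a)/\sin(\epsilon a)=\tan(\epsilon a/2)\to0$, this gives $\cos\rho\,\cos a\ge\cos\ell$, and as $\cos\rho\le1$ we get $\cos a\ge\cos\ell>0$. Hence $a,\ell\in[0,\pi/2)$, and strict monotonicity of $\cos$ there gives $d(x,p)=a\le\ell=\liminf_{n\to\infty}d(x_n,p)$, as claimed. The main obstacle is not a single computation but the careful bookkeeping of $\limsup$ versus $\liminf$ and the signs of the trigonometric factors when passing to the limit in the CAT(1) inequality, together with checking that the perimeter hypothesis of Theorem~\ref{pal} is satisfied (which is where the assumption $\limsup_n d(x_n,p)<\pi/2$ is used); once the $\epsilon$-family of inequalities is in hand, the limit $\epsilon\downarrow0$ is routine.
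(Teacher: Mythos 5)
This lemma is one the paper does not prove at all: it is imported verbatim from He--Fang--L\'opez--Li \cite{He-Fang-Lopez-Li}, so there is no internal argument to compare yours with; I can only judge your proof on its own, and it is correct. Your reduction to a subsequence realizing $\ell=\liminf_n d(x_n,p)$, the identification $\rho=r(x)=\inf_z r(z)\le r(p)=\ell<\pi/2$ from the definition of the asymptotic center, and the first-variation argument along the geodesic $[x,p]$ using Theorem~\ref{pal} with $v=z_\epsilon$ all fit together: the passage $\liminf_k\cos d(\cdot,x_{n_k})=\cos\bigl(\limsup_k d(\cdot,x_{n_k})\bigr)$ is legitimate because the relevant distances are eventually below $\pi$ (for $d(z_\epsilon,x_{n_k})$ this needs $\epsilon$ small, which you have since you only use small $\epsilon$ before letting $\epsilon\downarrow 0$), the geodesic $[x,p]$ exists because $d(x,p)\le\rho+\ell<\pi$ and $X$ is complete, the perimeter hypothesis of Theorem~\ref{pal} holds for large $k$ for the same reason, and the final limit $\cos\rho\cos a\ge\cos\ell>0$ does force $\cos a>0$ and hence $a\le\ell$. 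Two small points you could state more explicitly: minimality of $r$ at $x$ already gives $r(z_\epsilon)\ge\rho$ (uniqueness of the asymptotic center is not needed there), and the step ``$\cos a\ge\cos\rho\cos a$'' should be prefaced by the observation that $\cos a>0$, which follows from $\cos\rho\cos a\ge\cos\ell>0$; with those remarks the argument is complete and, in effect, reconstructs the cited result from Theorem~\ref{pal} alone, which is pleasantly self-contained for this paper.
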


\begin{lemma}[Kimura-Sat\^o \cite{KS2}]\label{weak}
Let $X$ be a complete {\rm CAT(1)} space such that $d(v,v')<\pi/2$ for every $v,v'\in X$ and $p\in X$. If a sequence $\{x_n\}$ in $X$ is $\Delta$-convergent to $x\in X$ and $\lim_{n\to\infty}d(x_n,p)=d(x,p)$, then $\{x_n\}$ is convergent to $x$.
\end{lemma}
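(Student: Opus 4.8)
\emph{Proof proposal.} This is the $\CAT(1)$ analogue of the Radon--Riesz (Kadec--Klee) property, and I would prove it by contradiction, using the defining variational property of the asymptotic center together with the spherical inequality of Theorem~\ref{pal}. If $x=p$ the statement is trivial, since then $d(x_n,x)=d(x_n,p)\to d(x,p)=0$; so assume $x\neq p$ and suppose $\{x_n\}$ does not converge to $x$. Then there are $\varepsilon>0$ and a subsequence $\{x_{n_k}\}$ with $d(x_{n_k},x)\geq\varepsilon$ for all $k$; being a subsequence of a $\Delta$-convergent sequence, $\{x_{n_k}\}$ is again $\Delta$-convergent to $x$, and still $d(x_{n_k},p)\to d(x,p)=:\rho\in(0,\pi/2)$. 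Put $\phi(z):=\limsup_{k\to\infty}d(z,x_{n_k})$ and $r:=\phi(x)$. Since $AC(\{x_{n_k}\})=\{x\}$, $\phi$ attains its infimum and does so only at $x$; thus $r\geq\varepsilon>0$, and, because $\phi(p)=\rho$ with $p\neq x$, we get $\rho>r$, while $r=\inf_w\phi(w)\leq\phi(p)=\rho$. Hence $0<r<\rho<\pi/2$.

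The core idea is to perturb $x$ a little toward $p$ and show the asymptotic radius strictly drops. For $s\in(0,1)$ set $z_s:=sx\oplus(1-s)p$, so $z_s\in[x,p]$ and $z_s\neq x$. Since the perimeter of the triangle $(x,p,x_{n_k})$ is below $3\pi/2<2\pi$, applying Theorem~\ref{pal} with parameter $s$ and reference point $x_{n_k}$ gives
\[
\cos d(z_s,x_{n_k})\,\sin\rho\ \geq\ \cos d(x,x_{n_k})\,\sin(s\rho)+\cos d(p,x_{n_k})\,\sin((1-s)\rho).
\]
Letting $k\to\infty$, and using $\liminf_k\cos d(x,x_{n_k})=\cos r$, $\cos d(p,x_{n_k})\to\cos\rho$, and $\cos\phi(z_s)=\liminf_k\cos d(z_s,x_{n_k})$ (all distances involved lie in $[0,\pi/2)$), one obtains
\[
\cos\phi(z_s)\ \geq\ \frac{\cos r\,\sin(s\rho)+\cos\rho\,\sin((1-s)\rho)}{\sin\rho}.
\]

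It now suffices to choose $s$ so that the right-hand side strictly exceeds $\cos r$. Multiplying by $\sin\rho$ and using $\sin\rho-\sin(s\rho)=2\cos\tfrac{(1+s)\rho}{2}\sin\tfrac{(1-s)\rho}{2}$ and $\sin((1-s)\rho)=2\sin\tfrac{(1-s)\rho}{2}\cos\tfrac{(1-s)\rho}{2}$, this reduces to
\[
\frac{\cos\tfrac{(1-s)\rho}{2}}{\cos\tfrac{(1+s)\rho}{2}}\ >\ \frac{\cos r}{\cos\rho}.
\]
The left-hand side is continuous and strictly increasing in $s\in(0,1)$ (its logarithmic derivative is $\tfrac{\rho}{2}\bigl(\tan\tfrac{(1-s)\rho}{2}+\tan\tfrac{(1+s)\rho}{2}\bigr)>0$), running from $1$ at $s=0$ to $1/\cos\rho$ at $s=1$; and since $0<r<\rho<\pi/2$, the number $\cos r/\cos\rho$ lies strictly between $1$ and $1/\cos\rho$. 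So some $s\in(0,1)$ works; for it $\phi(z_s)<r=\inf_w\phi(w)$, which is absurd. Therefore $\{x_n\}$ converges to $x$.

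I expect the main obstacle to be the very choice of the comparison point. The only extra datum is $d(x_n,p)\to d(x,p)$, and the way to exploit it is to notice that it identifies $\phi(p)=d(x,p)$, so that uniqueness of the asymptotic center yields the \emph{strict} gap $r<d(x,p)$; this gap is exactly the slack the spherical trigonometry needs in order to make a point slightly displaced from $x$ along $[x,p]$ a strictly better asymptotic center. Verifying the monotonicity of $s\mapsto\cos\tfrac{(1-s)\rho}{2}/\cos\tfrac{(1+s)\rho}{2}$ on $(0,1)$ is the one genuinely computational point; everything else is routine bookkeeping with $\limsup$.
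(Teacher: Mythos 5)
The paper does not actually prove this lemma: it is imported from Kimura--Sat\^o \cite{KS2}, so there is no internal proof to compare against, and your proposal stands as a self-contained argument. It is correct. The key checkpoints all hold: a subsequence of a $\Delta$-convergent sequence is again $\Delta$-convergent to the same point; uniqueness of the asymptotic center converts $\phi(p)=d(x,p)$ into the strict gap $0<r<\rho<\pi/2$; Theorem \ref{pal} applied to $z_s=sx\oplus(1-s)p$ (with the paper's convention $d(x,z_s)=(1-s)\rho$, $d(p,z_s)=s\rho$) together with the passage to $\liminf$ (legitimate because $\cos d(p,x_{n_k})$ genuinely converges while $\liminf_k\cos d(x,x_{n_k})=\cos r$) gives $\cos\phi(z_s)\sin\rho\geq\cos r\,\sin(s\rho)+\cos\rho\,\sin((1-s)\rho)$; and the product-to-sum reduction plus the strict monotonicity of $s\mapsto\cos\frac{(1-s)\rho}{2}\big/\cos\frac{(1+s)\rho}{2}$ from $1$ to $1/\cos\rho$ produces an $s\in(0,1)$ with $\phi(z_s)<r=\inf\phi$, the desired contradiction. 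This is in the spirit of the original Kimura--Sat\^o Kadec--Klee-type argument, and it has the virtue of relying only on Theorem \ref{pal}, which the paper already quotes; note that the admissible $s$ must be taken near $1$ (i.e.\ $z_s$ close to $x$) --- a fixed choice such as the midpoint would fail when $\cos r/\cos\rho$ is large --- and your intermediate-value selection handles exactly this point.
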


\begin{lemma}[Kimura-Sat\^o \cite{KS3}]\label{weakclosed}
Let $\{x_n\}$ be a sequence in some closed convex subset of the Hilbert sphere $S_H$. If $\{x_n\}$ is $\Delta$-convergent to $x\in S_H$, then $x\in C$.
\end{lemma}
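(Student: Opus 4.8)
The plan is to transfer the question to the ambient Hilbert space $H$ and to use that $\Delta$-convergence of a sequence on $S_H$ is detected by weak convergence of the underlying vectors of $H$. Throughout, $\langle\cdot,\cdot\rangle$ denotes the inner product of $H$, and we recall $\cos d(u,v)=\langle u,v\rangle$ for $u,v\in S_H$; write $C$ for the closed convex subset in the statement.

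First I would attach to $C$ the cone $K:=\{\lambda u\ |\ \lambda\ge 0,\ u\in C\}\subseteq H$ and verify that $K$ is a \emph{closed convex} subset of $H$ with $K\cap S_H=C$. The equality $K\cap S_H=C$ is immediate from $\norm{\lambda u}=\lambda$. Closedness follows by writing a convergent sequence of $K$ as $\lambda_k u_k\to z$, observing $\lambda_k=\norm{\lambda_k u_k}\to\norm z$, and, when $z\neq 0$, deducing $u_k\to z/\norm z\in C$ from the closedness of $C$. The only delicate point is convexity: for $u,v\in C$ and $a,b\ge 0$, I must check $au+bv\in K$. Apart from the trivial configurations ($au+bv=0$, or one of $a,b$ zero, or $u=\pm v$), the normalized vector $(au+bv)/\norm{au+bv}$ is a positive linear combination of $u$ and $v$, and an elementary computation with the parametrization of a great-circle arc shows that a normalized positive combination of $u$ and $v$ is exactly a point of the geodesic segment $[u,v]$ of $S_H$; geodesic convexity of $C$ then gives $(au+bv)/\norm{au+bv}\in C$, hence $au+bv\in K$.

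Next, since $\norm{x_n}=1$ for all $n$, the sequence $\{x_n\}$ is bounded in $H$ and so admits a subsequence $\{x_{n_i}\}$ that converges weakly in $H$ to some $w$, with $\norm w\le 1$. For every $y\in S_H$ we then have $\langle y,x_{n_i}\rangle\to\langle y,w\rangle$, and since $\arccos$ is continuous and strictly decreasing on $[-1,1]$,
\[
 \limsup_{i\to\infty}d(y,x_{n_i})=\lim_{i\to\infty}\arccos\langle y,x_{n_i}\rangle=\arccos\langle y,w\rangle .
\]
Hence $\inf_{y\in S_H}\limsup_{i\to\infty}d(y,x_{n_i})=\arccos\bigl(\sup_{y\in S_H}\langle y,w\rangle\bigr)$ and $AC(\{x_{n_i}\})=\Argmax_{y\in S_H}\langle y,w\rangle$. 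If $w=0$, this maximizer set is all of $S_H$, which is not a singleton and contradicts that $\{x_n\}$ — hence every subsequence of it — is $\Delta$-convergent; so $w\neq 0$, and then $AC(\{x_{n_i}\})=\{w/\norm w\}$.

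Finally, $\{x_{n_i}\}$ is a subsequence of the $\Delta$-convergent sequence $\{x_n\}$, so its asymptotic center is $\{x\}$; comparing with the previous step gives $w=\norm w\,x$ with $\norm w>0$. Each $x_{n_i}$ lies in $C\subseteq K$, and $K$, being closed and convex in the Hilbert space $H$, is weakly closed; therefore $w\in K$, and since $K$ is stable under multiplication by nonnegative scalars, $x=w/\norm w\in K$. As $x\in S_H$ as well, we conclude $x\in K\cap S_H=C$. The hard part will be the convexity of $K$ in the second step — one has to recognize that a geodesic segment of $S_H$ is precisely the set of normalized nonnegative combinations of its endpoints, and to dispose of the degenerate cases — after which the remaining steps are routine manipulations with weak convergence and the monotonicity of $\arccos$. (If $C$ additionally satisfies $d(v,v')<\pi/2$ for all $v,v'\in C$, as it does in our applications, an alternative is to suppose $x\notin C$, set $y=P_Cx$, and use the variational inequality $\cos d(x,z)\le\cos d(x,y)\cos d(y,z)$, valid for all $z\in C$, with $z=x_{n_i}$ to force $\limsup_i d(y,x_{n_i})<\limsup_i d(x,x_{n_i})$, contradicting that $x$ is the asymptotic center of $\{x_{n_i}\}$.)
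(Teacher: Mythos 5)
Your argument is correct, but note that the paper itself offers no proof of this lemma: it is imported verbatim from Kimura--Sat\^o \cite{KS3}, so there is no internal proof to compare against. What you have done is reconstruct, in a self-contained way, the standard correspondence between $\Delta$-convergence on $S_H$ and weak convergence in the ambient Hilbert space $H$, which is exactly the mechanism underlying the cited source. Your two key steps check out: (i) the cone $K=\{\lambda u : \lambda\ge 0,\ u\in C\}$ is closed and convex, the essential point being that a normalized nonnegative combination of two non-antipodal unit vectors lies on the minor great-circle arc joining them, i.e.\ on the geodesic segment $[u,v]$, so geodesic convexity of $C$ yields convexity of $K$ (the antipodal and degenerate configurations you set aside really are harmless, since then $au+bv$ is a nonnegative multiple of $u$ or of $v$, or zero; and in the paper's applications $C$ has diameter less than $\pi/2$, so they never occur); and (ii) for a weakly convergent subsequence $x_{n_i}\rightharpoonup w$ one has $\limsup_i d(y,x_{n_i})=\arccos\langle y,w\rangle$ for each $y\in S_H$, so the asymptotic center is the $\Argmax$ of $\langle\cdot,w\rangle$ over $S_H$, which is all of $S_H$ if $w=0$ (contradicting $\Delta$-convergence) and the singleton $\{w/\|w\|\}$ otherwise by Cauchy--Schwarz; combined with Mazur's theorem (weak closedness of the closed convex cone $K$) this gives $x=w/\|w\|\in K\cap S_H=C$. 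The only places where you wave your hands -- the arc-equals-normalized-cone computation and the disposal of degenerate cases -- are elementary and your sketch is convincing; your parenthetical alternative via the projection inequality $\cos d(x,z)\le\cos d(x,P_Cx)\cos d(P_Cx,z)$ also works, but only under the additional boundedness/diameter hypotheses you state, whereas the cone argument proves the lemma as stated.
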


\begin{lemma}[Ezawa-Kimura \cite{Ezawa-Kimura}]\label{sin}
If
\[
\sin\delta\geq \sin(\alpha\delta)+\sin((1-\alpha)\delta)
\]
for some $\delta\in[0,\pi/2]$ and $\alpha\in(0,1)$, then $\delta=0$.
\end{lemma}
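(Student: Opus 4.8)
The plan is to prove the strengthened contrapositive: I would show that whenever $\delta\in\openitvl{0}{\pi/2}$ (or $\delta=\pi/2$) and $\alpha\in(0,1)$, the \emph{strict} inequality
\[
\sin\delta < \sin(\alpha\delta)+\sin((1-\alpha)\delta)
\]
holds. The lemma follows at once, since the hypothesis $\sin\delta\geq\sin(\alpha\delta)+\sin((1-\alpha)\delta)$ then rules out every $\delta\in\openitvl{0}{\pi/2}$, leaving only $\delta=0$.

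To establish the strict inequality I would introduce $a:=\alpha\delta$ and $b:=(1-\alpha)\delta$. Because $\alpha\in(0,1)$ and $\delta>0$, both $a$ and $b$ are strictly positive, and since $a+b=\delta\leq\pi/2$ we have $0<a<\pi/2$ and $0<b<\pi/2$. Expanding $\sin\delta=\sin(a+b)$ by the angle-addition formula gives $\sin(a+b)=\sin a\cos b+\cos a\sin b$, so the target inequality $\sin(a+b)<\sin a+\sin b$ rearranges into
\[
\sin a\,(\cos b-1)+\sin b\,(\cos a-1)<0.
\]

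It then remains only to check the sign of each summand: $\sin a>0$ and $\sin b>0$ because $a,b\in\openitvl{0}{\pi/2}$, while $\cos a-1<0$ and $\cos b-1<0$ because $\cos$ is strictly below $1$ on $\opclitvl{0}{\pi/2}$. Hence both terms are strictly negative and so is their sum, which is exactly the inequality sought. I do not expect any genuine obstacle here; the only point demanding care is tracking where strictness enters. The openness of $(0,1)$ for $\alpha$ is precisely what forces both $a$ and $b$ to be positive, excluding a degenerate split in which one angle vanishes, and the bound $\delta\leq\pi/2$ is what keeps both angles in the range where $\sin$ is positive and $\cos<1$, so that the two factors carry the signs needed to close the argument.
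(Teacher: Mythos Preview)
Your argument is correct: setting $a=\alpha\delta$ and $b=(1-\alpha)\delta$ with $a+b=\delta\in\opclitvl{0}{\pi/2}$ and using the addition formula to rewrite the target as $\sin a\,(\cos b-1)+\sin b\,(\cos a-1)<0$ is clean, and the sign analysis goes through exactly as you describe. Note, however, that the paper does not supply its own proof of this lemma; it is quoted from \cite{Ezawa-Kimura} without argument, so there is nothing in the present paper to compare your approach against.
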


\begin{lemma}[Ezawa-Kimura \cite{Ezawa-Kimura}]\label{FixedPoints}
Let $X$ be a {\rm CAT(1)} space. Let $T_1,T_2,\ldots,T_r$ be quasinonexpansive mappings of X into itself such that $\bigcap_{i=1}^rF(T_i)\neq\emptyset$ and let $\alpha_1,\alpha_2,\ldots,\alpha_r$ be real numbers such that $0<\alpha_i<1$ for every $i=1,2,\ldots,r$. Let $W$ be the $W$-mappig of $X$ into itself generated by $T_1,T_2,\ldots,T_r$ and $\alpha_1,\alpha_2,\ldots,\alpha_r$. Then, $F(W)=\bigcap_{i=1}^rF(T_i)$.
\end{lemma}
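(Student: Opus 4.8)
The plan is to prove the two inclusions $\bigcap_{i=1}^{r}F(T_i)\subseteq F(W)$ and $F(W)\subseteq\bigcap_{i=1}^{r}F(T_i)$ separately; the first is routine and the second is the substance. For the first, if $p\in\bigcap_{i=1}^{r}F(T_i)$ then a trivial induction on $k$ gives $U_kp=\alpha_kT_kU_{k-1}p\oplus(1-\alpha_k)p=\alpha_kp\oplus(1-\alpha_k)p=p$, so $Wp=U_rp=p$.

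For the reverse inclusion I would first record that, for each fixed $p\in\bigcap_{i=1}^{r}F(T_i)$, every $U_k$ is quasinonexpansive with respect to $p$, i.e. $d(U_ky,p)\leq d(y,p)$ for all $y$. This is proved by induction on $k$ (the case $k=0$ being trivial): putting $w:=T_kU_{k-1}y$ one has $d(w,p)\leq d(U_{k-1}y,p)\leq d(y,p)$ by quasinonexpansivity of $T_k$ at $p$ and the inductive hypothesis, and since $U_ky=\alpha_kw\oplus(1-\alpha_k)y$, applying Theorem \ref{pal} to the geodesic triangle with vertices $w,y,p$ and parameter $\alpha_k$, combined with $\cos d(w,p)\geq\cos d(y,p)$ and the concavity of $\sin$ on $[0,\pi]$ (so that $\sin(\alpha_k\delta)+\sin((1-\alpha_k)\delta)\geq\sin\delta$), yields $\cos d(U_ky,p)\geq\cos d(y,p)$. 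Note that Lemma \ref{quasi nonexpansive} cannot be quoted directly here, since the $T_i$ are only assumed quasinonexpansive.

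The heart of the matter is a rigidity statement, to be established for each $k\in\{1,\dots,r\}$: if $d(U_kx,p)=d(x,p)$, then $T_kU_{k-1}x=x$ (hence $U_kx=x$) and $d(U_{k-1}x,p)=d(x,p)$. To see it, put $w:=T_kU_{k-1}x$ and $\delta:=d(w,x)$, so that $U_kx=\alpha_kw\oplus(1-\alpha_k)x$; Theorem \ref{pal} applied to the triangle $w,x,p$ with parameter $\alpha_k$ gives
\[
\cos d(U_kx,p)\sin\delta\geq\cos d(w,p)\sin(\alpha_k\delta)+\cos d(x,p)\sin((1-\alpha_k)\delta).
\]
Since $d(U_kx,p)=d(x,p)$ by assumption and $\cos d(w,p)\geq\cos d(U_{k-1}x,p)\geq\cos d(x,p)$ by quasinonexpansivity of $T_k$ together with the previous paragraph, this forces $\sin\delta\geq\sin(\alpha_k\delta)+\sin((1-\alpha_k)\delta)$, whence $\delta=0$ by Lemma \ref{sin}. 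Then $w=x$, the chain $d(x,p)=d(w,p)\leq d(U_{k-1}x,p)\leq d(x,p)$ gives $d(U_{k-1}x,p)=d(x,p)$, and $U_kx=\alpha_kx\oplus(1-\alpha_k)x=x$. Now let $x\in F(W)$ and choose any $p\in\bigcap_{i=1}^{r}F(T_i)$ (nonempty by hypothesis). Since $U_rx=Wx=x$ we have $d(U_rx,p)=d(x,p)$, so applying the rigidity statement successively for $k=r,r-1,\dots,1$ produces $T_kU_{k-1}x=x$ and $U_{k-1}x=x$ for every $k$. Therefore $T_kx=T_kU_{k-1}x=x$ for each $k$, i.e. $x\in\bigcap_{i=1}^{r}F(T_i)$, which completes the proof.

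I expect the rigidity step to be the main obstacle. In a Hilbert space it is immediate from the parallelogram-type identity $\norm{U_kx-p}^2=\alpha_k\norm{T_kU_{k-1}x-p}^2+(1-\alpha_k)\norm{x-p}^2-\alpha_k(1-\alpha_k)\norm{T_kU_{k-1}x-x}^2$, which yields a quantitative gap; in the $\CAT(1)$ setting one has only the inequality of Theorem \ref{pal} and must read the conclusion off the equality case of the elementary estimate in Lemma \ref{sin}. A further point to watch is the admissibility of these tools: Theorem \ref{pal} requires the triangles to have perimeter $<2\pi$, Lemma \ref{sin} requires $\delta\in[0,\pi/2]$, and dividing by $\cos d(x,p)$ requires $d(x,p)<\pi/2$. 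In the settings where the lemma is applied the ambient convex set has diameter at most $\pi/2$, so all of these hold, and this is the point at which the computation should be checked to be legitimate.
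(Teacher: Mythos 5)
This lemma is not proved in the paper at all; it is imported verbatim from \cite{Ezawa-Kimura}, so there is no in-paper argument to compare against. Your proof is correct and proceeds exactly in the spirit of the cited source and of the paper's own use of these tools in Theorem \ref{CQ}: establish quasinonexpansivity of each $U_k$ with respect to a common fixed point $p$, then run a rigidity/descent argument in which Theorem \ref{pal} reduces the equality case $d(U_kx,p)=d(x,p)$ to $\sin\delta\geq\sin(\alpha_k\delta)+\sin((1-\alpha_k)\delta)$ and Lemma \ref{sin} forces $\delta=0$, after which $T_kU_{k-1}x=x$ and $U_{k-1}x=x$ follow inductively down from $k=r$. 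Your closing caveat is also the right one: as stated for a bare $\CAT(1)$ space the argument is not available (one needs $d(x,p)<\pi/2$ to divide by $\cos d(x,p)$, $\delta\leq\pi/2$ for Lemma \ref{sin}, and perimeter $<2\pi$ for Theorem \ref{pal}); these conditions are part of the admissibility hypothesis in the original Ezawa--Kimura setting and hold wherever the present paper invokes the lemma, so the proof is legitimate there.
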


\section{Main result}\label{sec:Main result}

\begin{theorem}\label{CQ}
Let $C$ be a closed convex subset in real Hilbert shpere $S_H$ such that $d(v,v')<\pi/2$ for every $v,v'\in C$. Let $\alpha_{n,1},a_{n,2},\ldots,\alpha_{n,r}$ be real numbers for $n\in\mathbb{N}$ such that $\alpha_{n,i}\in[a,1-a]$ for every $i=1,2,\ldots,r$ where $0<a<1/2$, and let $T_1,T_2,\ldots,T_r$ be a finite number of nonexpansive mappings of C into itself such that $F:=\bigcap_{i=1}^rF(T_i)\neq\emptyset$. Let $W_n$ be the W-mappings of $X$ into itself generated by $T_1,T_2,\ldots,T_r$ and $\alpha_{n,1},\alpha_{n,2},\ldots,\alpha_{n,r}$ for $n\in\mathbb{N}$. For a given point $x_1\in C$, let $\{x_n\}$ be a sequence in $C$ generated by
\begin{align*}
y_n &:=  W_nx_n, \\
C_n &:= \{z\in C\ |\ d(y_n,z)\leq d(x_n,z)\}, \\
Q_n &:= \{ z\in C\ |\ \cos d(x_1, x_n)\cos d(x_n,z)\geq\cos d(x_1,z)\}, \\
x_{n+1} &:= P_{C_n\cap Q_n}x_1
\end{align*}
for all $n\in\mathbb{N}$. Then $\{x_n\}$ is well defined and convergent to $P_Fx_1$.
\end{theorem}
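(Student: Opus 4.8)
The argument follows the now-standard blueprint for CQ-type methods, adapted to the CAT(1) setting.

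First, establish that the iteration is well-defined. For each $n$, the set $C_n$ is a hemisphere and $Q_n$ is of the form $\{z \mid \cos d(x_1,x_n)\cos d(x_n,z) \geq \cos d(x_1,z)\}$, both of which are closed and convex by the remark after the definition of $S_H$; hence $C_n \cap Q_n$ is closed and convex, and the metric projection onto it exists once it is nonempty. The key claim is $F \subset C_n \cap Q_n$ for all $n$, proved by induction. Since each $U_i$ in the construction of $W_n$ is of the form $\alpha_{n,i} T_i U_{i-1} \oplus (1-\alpha_{n,i}) I$ with the $T_i$ nonexpansive, Lemma \ref{quasi nonexpansive} gives quasinonexpansivity of each building block, and an induction along $i$ (using Lemma \ref{FixedPoints} to identify $F(W_n)=\bigcap_i F(T_i)=F$) shows $d(W_n x, p) \leq d(x,p)$ for every $p \in F$; thus $F \subset C_n$. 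For $Q_n$, I would note $Q_1 = C$ trivially and that $Q_n$ is precisely the set characterizing points $z$ with $d(x_1, x_{n}) \le d(x_1, z)$ "through" the projection $x_n = P_{C_{n-1}\cap Q_{n-1}} x_1$; the characterization of the metric projection in a CAT(1) space with diameter $<\pi/2$ (the analogue of the variational inequality, phrased via Theorem \ref{pal}) gives $F \subset C_{n-1}\cap Q_{n-1} \subset Q_n$. Hence $x_{n+1}=P_{C_n\cap Q_n}x_1$ is well-defined.

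Next, show $\{d(x_1,x_n)\}$ is bounded and monotone. From $x_n \in Q_n$'s defining relation applied with $z = x_{n+1} \in C_n \cap Q_n \subset Q_n$ one gets $\cos d(x_1,x_n)\cos d(x_n,x_{n+1}) \geq \cos d(x_1,x_{n+1})$, which forces $d(x_1,x_n) \leq d(x_1,x_{n+1})$; since $x_{n+1}=P_{C_n\cap Q_n}x_1$ and $F \subset C_n\cap Q_n$, we also get $d(x_1,x_n) \leq d(x_1, P_F x_1) < \pi/2$. So $\{d(x_1,x_n)\}$ converges, and feeding this back into the same inequality yields $\cos d(x_n,x_{n+1}) \to 1$, i.e. $d(x_n,x_{n+1}) \to 0$. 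Because $x_{n+1} \in C_n$ we have $d(y_n, x_{n+1}) \leq d(x_n, x_{n+1})$, so $d(y_n, x_n) \leq d(y_n,x_{n+1}) + d(x_{n+1},x_n) \to 0$, that is $d(W_n x_n, x_n) \to 0$.

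Now comes the heart of the matter: deducing $\lim_{n\to\infty} d(T_i x_n, x_n) = 0$ for every $i$ from $d(W_n x_n, x_n)\to 0$. This is the step the authors single out as essential, and it is where Theorem \ref{pal} and Lemma \ref{sin} do the work. Writing $W_n x_n = U_r^{(n)} x_n$ and peeling off one layer at a time, Theorem \ref{pal} applied to the geodesic defining $U_r^{(n)} = \alpha_{n,r} T_r U_{r-1}^{(n)} \oplus (1-\alpha_{n,r}) I$ converts the smallness of $d(U_r^{(n)} x_n, p)$ (for $p\in F$, available since each layer is quasinonexpansive) together with $d(U_r^{(n)}x_n,x_n)\to0$ into a trigonometric inequality of the form $\sin \delta_n \gtrsim \sin(\alpha_{n,r}\delta_n) + \sin((1-\alpha_{n,r})\delta_n) - o(1)$, where $\delta_n$ measures $d(T_r U_{r-1}^{(n)} x_n, x_n)$ or $d(U_{r-1}^{(n)} x_n, x_n)$; with $\alpha_{n,i}$ bounded away from $0$ and $1$, Lemma \ref{sin} (in its quantitative/limiting form) forces $d(U_{r-1}^{(n)} x_n, x_n) \to 0$, and then by nonexpansiveness $d(T_r x_n, x_n)\to 0$; iterating downward through $U_{r-1}, \ldots, U_1$ gives the claim for every $i$. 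This inductive trigonometric estimate is the main obstacle — one must be careful that the "diameter $<\pi/2$" hypothesis keeps all the relevant distances in the range where $\sin$ and $\cos$ behave monotonically and Theorem \ref{pal} and Lemma \ref{sin} apply.

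Finally, extract the limit. Since $\limsup_n d(x_n, x_1) < \pi/2$, Lemma \ref{subseq} provides a $\Delta$-convergent subsequence $\{x_{n_k}\}$ with $\Delta$-limit $w$; Lemma \ref{weakclosed} puts $w \in C$. For each $i$, $d(T_i x_{n_k}, x_{n_k}) \to 0$ and $\Delta$-demiclosedness of $T_i$ — obtained by applying Lemma \ref{delta-demiclosed} to $\tfrac{1}{2}T_i \oplus \tfrac12 I$, or directly — give $w \in F(T_i)$, hence $w \in F$. Set $u = P_F x_1$. From $x_{n_k+1}=P_{C_{n_k}\cap Q_{n_k}}x_1$ with $u\in F \subset C_{n_k}\cap Q_{n_k}$ we have $d(x_1, x_{n_k+1}) \leq d(x_1, u)$; combined with $d(x_n,x_{n+1})\to 0$ this yields $\limsup_k d(x_1, x_{n_k}) \leq d(x_1,u)$. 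On the other hand Lemma \ref{conv} applied to the sequence $\{x_{n_k}\}$ and the point $x_1$ gives $d(x_1, w) \leq \liminf_k d(x_1, x_{n_k})$, so $d(x_1,w) \leq d(x_1, u) = d(x_1, P_F x_1)$; since $w\in F$, minimality forces $w = u$ and $\lim_k d(x_1, x_{n_k}) = d(x_1, u)$. Lemma \ref{weak} then upgrades $\Delta$-convergence to metric convergence $x_{n_k} \to u$. As the whole sequence $\{d(x_1,x_n)\}$ converges and every $\Delta$-cluster point is forced by the same argument to equal $u$, a standard subsequence argument gives $x_n \to u = P_F x_1$, completing the proof.
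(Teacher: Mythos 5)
Your proposal is correct and follows essentially the same route as the paper: the inductive proof that $F\subset C_n\cap Q_n$ via the projection characterization coming from Theorem \ref{pal}, the monotonicity and boundedness of $d(x_1,x_n)$ giving $d(x_n,x_{n+1})\to 0$ and $d(W_nx_n,x_n)\to 0$, the layer-by-layer trigonometric peeling with Theorem \ref{pal} and Lemma \ref{sin} to obtain $d(T_ix_n,x_n)\to 0$, and the $\Delta$-compactness plus Lemmas \ref{conv} and \ref{weak} subsequence argument identifying the limit as $P_Fx_1$. The only cosmetic difference is that you invoke $\Delta$-demiclosedness (Lemma \ref{delta-demiclosed}) to place the $\Delta$-limit in $F$, whereas the paper runs the asymptotic-center computation directly; both are equivalent.
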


\begin{proof}
First, we show that $\{x_n\}$ is well defined, that is, we show that $C_n\cap Q_n$ is nonempty closed convex subset. By the definition of $C_n$ and $Q_n$, $C_n$ and $Q_n$ are closed subsets. Since $C_n$ and $Q_n$ are hemisphere, $C_n$ and $Q_n$ are convex subsets. Thus $C_n\cap Q_n$ is closed convex subset in $C$. In order to show that it is nonempty, we show $F\subset C_n\cap Q_n$ by induction for $n\in\mathbb{N}$. Since $C_1=Q_1=C$, we have $F\subset C_1\cap Q_1$, and $C_1\cap Q_1$ is nonempty closed convex subset. We assume the induction hypothesis that $F\subset C_k\cap Q_k$ and show $F\subset C_{k+1}\cap Q_{k+1}$. For all $z\in F$, by Lemma \ref{FixedPoints}, $d(W_{k+1}x_{k+1},z)\leq d(x_{k+1},z)$ and thus $z\in C_{k+1}$. By induction hypothesis, $z\in C_k\cap Q_k$, and therefore, for any $t\in[0,1], tz\oplus(1-t)x_{k+1}=tz\oplus(1-t)P_{C_k\cap Q_k}x_1\in C_k\cap Q_k$. Then we have
\begin{align*}
& 2\cos d(x_1,x_{k+1})\cos\left(\left(1-\dfrac{t}{2}\right)d(x_{k+1},z)\right)\sin\left(\dfrac{t}{2}d(x_{k+1},z)\right) \\
&= \cos d(x_1,x_{k+1})(\sin d(x_{k+1},z)-\sin((1-t)d(x_{k+1},z))) \\
&= \cos d(x_1,P_{C_k\cap Q_k}x_1)\sin d(x_{k+1},z)-\cos d(x_1,x_{k+1})\sin((1-t)d(x_{k+1},z)) \\
&\geq \cos d(x_1,tz\oplus(1-t)x_{k+1})\sin d(x_{k+1},z)-\cos d(x_1,x_{k+1})\sin((1-t)d(x_{k+1},z)). \\
\end{align*}
By Theorem \ref{pal}, the last expression is estimated as 
\begin{align*}
&\geq \cos d(x_1,z)\sin(td(z,x_{k+1}))+\cos d(x_1,x_{k+1})\sin((1-t)d(z,x_{k+1})) \\
& -\cos d(x_1,x_{k+1})\sin((1-t)d(z,x_{k+1})) \\
&= \cos d(x_1,z)\sin(td(z,x_{k+1})) \\
&= 2 \cos d(x_1,z)\sin\left(\dfrac{t}{2}d(z,x_{k+1})\right)\cos\left(\dfrac{t}{2}d(z,x_{k+1})\right).
\end{align*}
If $z=x_{k+1}$, then it is obvious that $z\in Q_{k+1}$ by definition of $Q_k$. So, we assume that $z\neq x_{k+1}$. Dividing above by $2\sin(td(z,x_{k+1})/2)$ and letting $t\to 0$, we have
\[
\cos d(x_1,x_{k+1})\cos d(x_k,z)\geq\cos d(x_1,z)
\]
and thus $z\in Q_{k+1}$. From the above, we get $z\in C_{k+1}\cap Q_{k+1}$ and $F\subset C_{k+1}\cap Q_{k+1}$. Therefore, $C_n\cap Q_n$ is nonempty closed convex subset and $\{x_n\}$ is well defined.

Next, we show that $\lim_{n\to\infty}d(T_ix_n,x_n)=0$ for all $i=1,2,\ldots,r$ to get our result. By the definition of metric projection, for all $n\in\mathbb{N}$, and letting $n\to\infty$,we obtain
\[
d(x_1,x_n)=d(x_1,P_{C_{n-1}\cap Q_{n-1}}x_1)\leq d(x_1,P_Fx_1)<\dfrac{\pi}{2}
\]
for all $n\in\mathbb{N}\setminus\{1\}$ and hence $\sup_{n\in\mathbb{N}}d(x_1,x_n)\leq d(x_1,P_Fx_1)<\pi/2$. By the definition of $Q_n$, we have
\[
d(x_1,x_n)=d(x_1,P_{Q_n}x_1)\leq d(x_1,P_{C_n\cap Q_n}x_1)=d(x_1,x_{n+1}).
\]
Then, $\{\cos d(x_1,x_n)\}$ is a monotonically non-increasing sequence of real numbers. Then we can put
\[
a:=\lim_{n\to\infty}\cos d(x_1,x_n)>\cos\dfrac{\pi}{2}=0
\]
By the definition $x_{n+1}=P_{C_n\cap Q_n}x_1\in Q_n$, we have
\[
\cos d(x_1,x_n)\cos d(x_n,x_{n+1})\geq\cos d(x_1,x_{n+1}).
\]
for all $n\in\mathbb{N}$ hold. we have
\[
a\liminf_{n\to\infty}\cos d(x_n,x_{n+1})\geq a.
\]
It follow that
\[
1\geq \cos\left(\limsup_{n\to\infty}d(x_n,x_{n+1})\right)=\liminf_{n\to\infty}\cos d(x_n,x_{n+1})\geq 1,
\]
and we have that $\limsup_{n\to\infty}d(x_n,x_{n+1})=0$. Hence $\lim_{n\to\infty}d(x_n,x_{n+1})=0$. By the definition $C_n$ and $x_{n+1}$, it follows that $x_{n+1}\in C_n$ and then
\[
d(W_nx_n,x_{n+1})\leq d(x_n,x_{n+1}).
\]
We have that
\begin{align*}
0 &\leq \alpha_{n,r}d(T_rU_{n,r-1}x_n,x_n) \\
&= d(\alpha_{n,r}T_rU_{n,r-1}x_n\oplus(1-\alpha_{n,r})x_n,x_n) \\
&= d(W_nx_n,x_n) \\
&\leq d(W_nx_n,x_{n+1})+d(x_{n+1},x_n) \\
&\leq d(x_n,x_{n+1})+d(x_{n+1},x_n) \\
&= 2d(x_n,x_{n+1})\to 0\ (n\to\infty).
\end{align*}
Since $\inf_{n\in\mathbb{N}}\alpha_{n,r}>0$, we get
\[
\lim_{n\to\infty}d(T_r,U_{n,r-1}x_n,x_n)=0.
\]
Next we show
\[
\lim_{n\to\infty}d(T_{r-1}U_{n,r-2}x_n,x_n)=0.
\]
Since $P_{C_n\cap Q_n}$ is quasinonexpansive and $z\in F\subset C_n\cap Q_n$, we have that
\[
d(x_{n+1},z)=d(P_{C_n\cap Q_n}x_1,z)\leq d(x_1,z)<\dfrac{\pi}{2}.
\]
Thus we have that
\[
\inf_{n\in\mathbb{N}}\cos d(x_n,z)=\cos\left(\sup_{n\in\mathbb{N}} d(x_n,z)\right)>\cos\dfrac{\pi}{2}=0.
\]
Put $\varepsilon_n:=d(T_rU_{n,r-1}x_n,x_n)$ and $\delta_n:=d(T_{r-1}U_{n,r-2}x_n,x_n)$. Then, we have
\begin{align*}
& \quad\cos d(x_n,z)\sin d(T_{r-1}U_{n,r-2}x_n,x_n) \\
&\geq \cos(d(x_n,T_rU_{n,r-1}x_n)+d(T_rU_{n,r-1}x_n,z))\sin d(T_{r-1}U_{n,r-2}x_n,x_n) \\
&= \cos(\varepsilon_n+d(T_rU_{n,r-1}x_n,z))\sin d(T_{r-1}U_{n,r-2}x_n,x_n) \\
&= \{\cos\varepsilon_n\cos d(T_rU_{n,r-1}x_n,z)-\sin\varepsilon_n\sin d(T_rU_{n,r-1}x_n,z)\}\sin d(T_{r-1}U_{n,r-2}x_n,x_n) \\
&= \cos\varepsilon_n\cos d(T_rU_{n,r-1}x_n,z)\sin d(T_{r-1}U_{n,r-2}x_n,x_n) \\
& \quad-\sin\varepsilon_n\sin d(T_rU_{n,r-1}x_n,z)\sin d(T_{r-1}U_{n,r-2}x_n,x_n) \\
\end{align*}
Since $T_r$ is nonexpansive,
\[
\geq \cos\varepsilon_n\cos d(U_{n,r-1}x_n,z)\sin d(T_{r-1}U_{n,r-2}x_n,x_n)-\sin\varepsilon_n\sin d(T_rU_{n,r-1}x_n,z)\sin\delta_n \\
\]
By Theorem \ref{pal}
\begin{align*}
&\geq \cos\varepsilon_n\{\cos d(T_{r-1}U_{n,r-2}x_n,z)\sin(\alpha_{n,r-1}d(T_{r-1}U_{n,r-2}x_n,x_n)) \\
& \quad+\cos d(x_n,z)\sin((1-\alpha_{n,r-1})d(T_{r-1}U_{n,r-2}x_n,x_n))\} -\sin\varepsilon_n\sin d(T_rU_{n,r-1}x_n,z)\sin\delta_n \\
\end{align*}
By Lemma \ref{quasi nonexpansive}, $T_{r-1}U_{n,r-2}$ is quasinonexpansive, so
\begin{align*}
&\geq \cos\varepsilon_n\{\cos d(x_n,z)\sin(\alpha_{n,r-1}d(T_{r-1}U_{n,r-2}x_n,x_n)) \\
& \quad+\cos d(x_n,z)\sin((1-\alpha_{n,r-1})d(T_{r-1}U_{n,r-2}x_n,x_n))\}-\sin\varepsilon_n\sin d(T_rU_{n,r-1}x_n,z)\sin \delta_n \\
&= \cos\varepsilon_n\cos d(x_n,z)\{\sin(\alpha_{n,r-1}d(T_{r-1}U_{n,r-2}x_n,x_n)) +\sin((1-\alpha_{n,r-1})d(T_{r-1}U_{n,r-2}x_n,x_n))\} \\
& \quad-\sin\varepsilon_n\sin d(T_rU_{n,r-1}x_n,z)\sin\delta_n
\end{align*}
and hence
\begin{align*}
\cos d(x_n,z)\sin\delta_n &\geq \cos\varepsilon_n\cos d(x_n,z)\{\sin(\alpha_{n,r-1}\delta_n)+\sin((1-\alpha_{n,r-1})\delta_n)\} \\
& \quad-\sin\varepsilon_n\sin d(T_rU_{n,r-1}x_n,z)\sin\delta_n.
\end{align*}
Then dividing above by $\cos d(x_n,z)$, we have
\[
\sin\delta_n \geq \cos\varepsilon_n\{\sin(\alpha_{n,r-1}\delta_n)+\sin((1-\alpha_{n,r-1})\delta_n)\} -\dfrac{\sin\varepsilon_n\sin d(T_rU_{n,r-1}x_n,z)\sin\delta_n}{\cos d(x_n,z)}
\]
Let $\{\delta_{n_i}\}$ be a convergent subsequence whose limit is $\delta\in [0,\pi/2]$. There exists subsequence $\{\alpha_{n_{i_j},r-1}\}$ of $\{\alpha_{n_i,r-1}\}$ and $\alpha\in(0,1)$ such that $\alpha_{n_{i_j},r-1}\to\alpha$ as $j\to\infty$. Then since $\varepsilon_{n_{i_j}}\to 0$ as $j\to\infty$, we get
\[
\sin\delta\geq \sin(\alpha\delta)+\sin((1-\alpha)\delta).
\]
By Lemma \ref{sin} $\delta=0$. Therefore $\{\delta_n\}$ converges to $0$, that is,
\[
\lim_{n\to\infty}d(T_{r-1}U_{n,r-1}x_n,x_n)=0.
\]
Using a similar calculation inductively, we have
\[
\lim_{n\to\infty}d(T_iU_{n,i-1}x_n,x_n)=0
\]
for all $i=1,2,\ldots,r$. Since
\begin{align*}
d(T_ix_n,x_n) &\leq d(T_ix_n,T_iU_{n,i-1}x_n)+d(T_iU_{n,i-1}x_n,x_n) \\
&\leq d(x_n,U_{n,i-1}x_n)+d(T_iU_{n,i-1}x_n,x_n) \\
&= d(x_n,\alpha_{n,i-1}T_{i-1}U_{n,i-2}x_n\oplus(1-\alpha_{n,i-1})x_n)+d(T_iU_{n,i-1}x_n,x_n) \\
&= \alpha_{n,i-1}d(T_{i-1}U_{n,i-2}x_n,x_n)+d(T_iU_{n,i-1}x_x,x_n) \\
&\to 0 \ (n\to\infty),
\end{align*}
we obtain $\lim_{n\to\infty}d(T_ix_n,x_n)=0$ for all $i=1,2,\ldots,r$. Let $\{x_{n_i}\}$ be an arbitrary subsequence of $\{x_n\}$. By the inequality $\sup_{j\in\mathbb{N}}d(x_1,x_{n_j})\leq\sup_{n\in\mathbb{N}}d(x_1,x_n)<\pi/2$ and Lemma \ref{subseq}, there exists a subsequence $\{x_{n_{i_j}}\}$ of $\{x_{n_i}\}$ and $w_\infty$ such that $\{x_{n_{i_j}}\}$ is $\Delta$-convergent to $w_\infty$. For the sake of simplicity, we put $w_j:=x_{n_{i_j}}$. Then we can show $w_\infty\in \bigcap_{i=1}^rF(T_i)$. For all $i=1,2,\ldots,r$,
\begin{align*}
\limsup_{j\to\infty} d(w_j,T_iw_\infty) &\leq \limsup_{j\to\infty}(d(w_j,T_iw_j)+d(T_iw_j,T_iw_\infty)) \\
&\leq \limsup_{j\to\infty}(d(w_j,T_iw_j)+d(w_j,w_\infty)) \\
&= \limsup_{j\to\infty}d(w_j,w_\infty).
\end{align*}
By the definition of $\Delta$-convergence, we get $T_iw_\infty=w_\infty$. Hence $w_\infty\in\bigcap_{i=1}^rF(T_i)$. Since
\[
w_j=x_{n_{i_j}}=P_{C_{n_{i_j}-1}\cap Q_{n_{i_j}-1}}x_1\in C_{n_{i_j}-1}\cap Q_{n_{i_j}-1},
\]
$F\subset C_{n_{i_j}-1}\cap Q_{n_{i_j}-1}$ and Lemma \ref{conv}
\[
d(x_1,P_Fx_1)\leq d(x_1,w_\infty)\leq\lim_{j\to\infty}d(x_1,w_j)\leq d(x_1,P_Fx_1).
\]
Thus we get $\lim_{j\to\infty}d(x_1,w_j)=d(x_1,w_\infty)$ and by Lemma \ref{subseq}, $\{w_j\}$ is convergent to $w_\infty$. On the other hand, we get $d(x_1,P_Fx_1)=d(x_1,w_\infty)$ and by definition of the metric projection $P_F$, we get $w_\infty=P_Fx_1$. From the above, any subsequence $\{x_{n_i}\}$ of $\{ x_n\}$ has a subsequence $\{x_{n_{i_j}}\}$ such that $\{x_{n_{i_j}}\}$ is convergent to $P_Fx_1$. Therefore $\{x_n\}$ is convergent to $P_Fx_1$.
\end{proof}

Next, we consider the shrinking projection method using $W$-mapping on a complete CAT(1) space.

\begin{theorem}\label{Shrinking}
Let $C$ be a closed convex subset in real Hilbert shpere $S_H$ such that $d(v,v')<\pi/2$ for every $v,v'\in C$. Let $\alpha_{n,1},a_{n,2},\ldots,\alpha_{n,r}$ be real numbers for $n\in\mathbb{N}$ such that $\alpha_{n,i}\in[a,1-a]$ for every $i=1,2,\ldots,r$ where $0<a<1/2$, and let $T_1,T_2,\ldots,T_r$ be a finite number of nonexpansive mappings of C into itself such that $F:=\bigcap_{i=1}^rF(T_i)\neq\emptyset$. Let $W_n$ be the W-mappings of $X$ into itself generated by $T_1,T_2,\ldots,T_r$ and $\alpha_{n,1},\alpha_{n,2},\ldots,\alpha_{n,r}$ for $n\in\mathbb{N}$. For a given point $x_1\in C$, let $\{x_n\}$ be a sequence in $C$ generated by
\begin{align*}
x_1 &:=x\in C, \\
y_n &:=W_nx_n, \\
C_n &:=\{z\in C\ |\ d(y_n,z)\leq d(x_n,z)\}\cap C_{n-1}, \\
x_{n+1} &:=P_{C_n}x_1,
\end{align*}
for all $n\in\mathbb{N}$. Then $\{x_n\}$ is well defined and convergent to $P_Fx_1$.
\end{theorem}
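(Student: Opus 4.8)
The plan is to mirror the proof of Theorem~\ref{CQ}, with the sets $Q_n$ there replaced by two ingredients: the nesting $C_n\subseteq C_{n-1}$ that is built into the definition, and a direct appeal to Theorem~\ref{pal}. First I would check that $\{x_n\}$ is well defined. Setting $C_0:=C$, each set $\{z\in C\mid d(y_n,z)\le d(x_n,z)\}$ is a hemisphere, hence closed and convex, so by induction every $C_n$ is a closed convex subset of $C$; and $F\subseteq C_n$ follows by induction, using that $W_n$ is quasinonexpansive with $F(W_n)=F$ (Lemma~\ref{quasi nonexpansive} and Lemma~\ref{FixedPoints}), so that $d(y_n,z)=d(W_nx_n,z)\le d(x_n,z)$ for every $z\in F$. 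In particular each $C_n$ is nonempty and $x_{n+1}=P_{C_n}x_1$ is well defined. Since $x_{n+1}\in C_n\subseteq C_{n-1}$ while $x_n=P_{C_{n-1}}x_1$, the sequence $\{d(x_1,x_n)\}$ is nondecreasing, and since $F\subseteq C_n$ it is bounded above by $d(x_1,P_Fx_1)<\pi/2$; hence it converges, $\sup_n d(x_1,x_n)<\pi/2$, and $\{\cos d(x_1,x_n)\}$ converges to a positive limit.

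The main obstacle is to prove $\lim_{n\to\infty}d(x_n,x_{n+1})=0$, and this is the one point where the shrinking method genuinely differs from the CQ method: there the inequality relating $d(x_1,x_n)$, $d(x_n,x_{n+1})$ and $d(x_1,x_{n+1})$ is immediate from $x_{n+1}\in Q_n$, whereas here it must be recovered from the minimality of the metric projection. Fix $n$ and put $v_t:=tx_{n+1}\oplus(1-t)x_n$ for $t\in[0,1]$; since $x_{n+1}\in C_n\subseteq C_{n-1}$ and $C_{n-1}$ is convex, $v_t\in C_{n-1}$, so $\cos d(x_1,v_t)\le\cos d(x_1,x_n)$ by the definition of $x_n=P_{C_{n-1}}x_1$. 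Applying Theorem~\ref{pal} to the geodesic triangle with vertices $x_{n+1},x_n,x_1$ and the point $v_t$, then simplifying exactly as in the first display of the proof of Theorem~\ref{CQ} (dividing by $2\sin(t\,d(x_n,x_{n+1})/2)$ and letting $t\to0$), one obtains
\[
\cos d(x_1,x_n)\cos d(x_n,x_{n+1})\ge\cos d(x_1,x_{n+1}).
\]
Dividing by the positive quantity $\cos d(x_1,x_n)$ and using that $\cos d(x_1,x_n)$ and $\cos d(x_1,x_{n+1})$ converge to the same positive limit, we get $\liminf_{n\to\infty}\cos d(x_n,x_{n+1})\ge1$, hence $d(x_n,x_{n+1})\to0$.

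From here the proof runs parallel to that of Theorem~\ref{CQ}. Because $x_{n+1}\in C_n\subseteq\{z\in C\mid d(W_nx_n,z)\le d(x_n,z)\}$, we have $d(W_nx_n,x_{n+1})\le d(x_n,x_{n+1})$, so $d(W_nx_n,x_n)\le 2d(x_n,x_{n+1})\to0$. Using that $\inf_n\alpha_{n,i}\ge a>0$ and that $\inf_n\cos d(x_n,z)>0$ for a fixed $z\in F$ (which holds since $P_{C_n}$ is quasinonexpansive, whence $d(x_{n+1},z)\le d(x_1,z)<\pi/2$), together with repeated applications of Theorem~\ref{pal}, Lemma~\ref{quasi nonexpansive} and Lemma~\ref{sin}, one peels off the layers $U_{n,r},U_{n,r-1},\dots$ of the $W$-mapping to obtain $\lim_{n\to\infty}d(T_iU_{n,i-1}x_n,x_n)=0$, and then $\lim_{n\to\infty}d(T_ix_n,x_n)=0$, for every $i=1,\dots,r$; this is a verbatim transcription of the corresponding passage in the proof of Theorem~\ref{CQ}.

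Finally, let $\{x_{n_i}\}$ be an arbitrary subsequence of $\{x_n\}$. Since $\sup_n d(x_1,x_n)<\pi/2$, Lemma~\ref{subseq} yields a further subsequence that is $\Delta$-convergent to some $w_\infty$; write $w_j:=x_{n_{i_j}}$, and note $w_\infty\in C$ by Lemma~\ref{weakclosed}. The estimate $\limsup_j d(w_j,T_iw_\infty)\le\limsup_j\big(d(w_j,T_iw_j)+d(w_j,w_\infty)\big)=\limsup_j d(w_j,w_\infty)$, combined with the fact that the asymptotic center of $\{w_j\}$ is $\{w_\infty\}$, forces $T_iw_\infty=w_\infty$ for every $i$, so $w_\infty\in F$. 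Since $w_j=P_{C_{n_{i_j}-1}}x_1\in C_{n_{i_j}-1}$ and $F\subseteq C_{n_{i_j}-1}$, Lemma~\ref{conv} gives
\[
d(x_1,P_Fx_1)\le d(x_1,w_\infty)\le\liminf_{j\to\infty}d(x_1,w_j)=\lim_{n\to\infty}d(x_1,x_n)\le d(x_1,P_Fx_1),
\]
so equality holds throughout; uniqueness of the metric projection $P_F$ forces $w_\infty=P_Fx_1$, and then $\lim_j d(x_1,w_j)=d(x_1,w_\infty)$ together with Lemma~\ref{weak} gives $w_j\to P_Fx_1$. As every subsequence of $\{x_n\}$ has a further subsequence converging to $P_Fx_1$, the whole sequence converges to $P_Fx_1$. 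The only step needing an idea not already present in the proof of Theorem~\ref{CQ} is the recovery of the projection inequality in the second paragraph via Theorem~\ref{pal}; everything else is a routine adaptation.
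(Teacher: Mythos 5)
Your proposal is correct, but it follows a different route from the paper's own proof of Theorem~\ref{Shrinking}, so a comparison is worthwhile. You transplant the CQ argument wholesale: you recover the $Q_n$-type projection inequality $\cos d(x_1,x_n)\cos d(x_n,x_{n+1})\geq\cos d(x_1,x_{n+1})$ from the minimality of $x_n=P_{C_{n-1}}x_1$ via Theorem~\ref{pal} (your computation with $v_t=tx_{n+1}\oplus(1-t)x_n\in C_{n-1}$ is exactly the first display of the proof of Theorem~\ref{CQ} with $z$ replaced by $x_{n+1}$, and it is valid), deduce the asymptotic regularity $d(x_n,x_{n+1})\to 0$, then $d(W_nx_n,x_n)\to 0$, run the peeling argument, and identify the limit of $\Delta$-convergent subsequences directly as $P_Fx_1$. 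The paper instead exploits the nestedness $C_n\subseteq C_{n-1}$ more aggressively: it first shows, using Lemma~\ref{weakclosed} and Lemma~\ref{conv} together with the monotone bounded sequence $d(x_1,x_n)$, that the whole sequence converges strongly to $P_{\bigcap_{k}C_k}x_1$; only afterwards does it obtain $d(W_nx_n,x_n)\to 0$, from $d(W_nx_n,P_{\bigcap_k C_k}x_1)\leq d(x_n,P_{\bigcap_k C_k}x_1)\to 0$, so it never needs the projection inequality nor $d(x_n,x_{n+1})\to 0$; finally it shows $P_{\bigcap_k C_k}x_1\in F$ and hence equals $P_Fx_1$. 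Your route buys uniformity with the CQ proof and avoids introducing the intermediate limit $P_{\bigcap_k C_k}x_1$, at the cost of one extra application of Theorem~\ref{pal} (which is indeed the only genuinely new step, as you note, and is a correct spherical analogue of the variational characterization of the metric projection); the paper's route is slightly more economical and is the standard way shrinking-projection proofs exploit the decreasing family $\{C_n\}$. All the auxiliary facts you invoke (quasinonexpansiveness of $P_{C_n}$ to get $\inf_n\cos d(x_n,z)>0$, Lemmas~\ref{quasi nonexpansive}, \ref{FixedPoints}, \ref{sin}, \ref{subseq}, \ref{conv}, \ref{weak}) are used in the same way as in the paper, so no gap remains.
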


\begin{proof}
First, we show that $\{x_n\}$ is well defined, that is, we show that $C_n$ is nonempty closed convex subset of $C$. By the definition of $C_n$, we have that $C_n$ is a closed convex subset of $C$. So, we shall show $C_n$ is nonempty. Since $T_1,T_2,\ldots,T_r$ are nonexpansive, by Lemma \ref{quasi nonexpansive}, $W_n$ is quasinonexpansive. Then by Lemma \ref{FixedPoints}, we have that $F\subset \{z\in C\ |\ d(y_n,z)\leq d(x_n,z)\}$. Then by induction, we have that $F\subset C_n$ for all $n\in\mathbb{N}$, that is, $C_n$ is nonempty for all $n\in\mathbb{N}$. Therefore, $\{x_n\}$ is well defined.

Next, we show that $\{x_n\}$ is convergent to $P_Fx_1$. Let $\{x_{n_i}\}$ be an arbitrary subsequence of $\{x_n\}$. By the inequality $\sup_{j\in\mathbb{N}}d(x_1,x_{n_j})\leq\sup_{n\in\mathbb{N}}d(x_1,x_n)<\pi/2$, there exists a subsequence $\{x_{n_{i_j}}\}$ of $\{x_{n_i}\}$ and $w_\infty$ such that $\{x_{n_{i_j}}\}$ is $\Delta$-convergent to $w_\infty$. For the sake of simplicity, we put $w_j:=x_{n_{i_j}}$. For all $k\in\mathbb{N}$, there exists $j_0\in\mathbb{N}$ such that, for every $j\geq j_0, w_j\in C_k$. By Lemma \ref{weakclosed}, we get $w_\infty\in C_k$. Thus we have that $w_\infty\in\bigcap_{k=1}^\infty C_k$, and by Lemma \ref{conv}
\[
d(x_1,P_{\bigcap_{k=1}^\infty C_k}x_1)\leq d(x_1,w_\infty)\leq\lim_{j\to\infty}d(x_1,w_j)=\lim_{j\to\infty}d(x_1,P_{C_{n_{i_j}}}x_1)\leq d(x_1,P_{\bigcap_{k=1}^\infty C_k}x_1).
\]
Thus we get $d(x_1,w_\infty)=\lim_{j\to\infty}d(x_1,w_j)$ and by Lemma \ref{subseq}, $\{w_j\}$ convergent to $w_\infty$. On the other hand, we get $d(x_1,P_{\bigcap_{k=1}^\infty C_k}x_1)=d(x_1,w_\infty)$. By the definition of the metric projection $P_{\bigcap_{k=1}^\infty C_k}$, we get $w_\infty=P_{\bigcap_{k=1}^\infty C_k}x_1$. From the above, for any subsequence $\{x_{n_i}\}$ of $\{ x_n\}$ has a subsequence $\{x_{n_{i_j}}\}$ such that $\{x_{n_{i_j}}\}$ convergent to $P_{\bigcap_{k=1}^\infty C_k}x_1$. Since $P_{\bigcap_{k=1}^\infty C_k}x_1\in \bigcap_{k=1}^\infty C_k\subset C_n$ for all $n\in\mathbb{N}$ and the definition of $C_n$, $d(W_nx_n,P_{\bigcap_{k=1}^\infty C_k}x_1)\leq d(x_n, P_{\bigcap_{k=1}^\infty C_k}x_1)$. Thus
\[
\limsup_{n\to\infty}d(W_nx_n,P_{\bigcap_{k=1}^\infty C_k}x_1)\leq\lim_{n\to\infty}d(x_n, P_{\bigcap_{k=1}^\infty C_k}x_1)=0
\]
holds. Hence $\{W_nx_n\}$ is convergent to $P_{\bigcap_{k=1}^\infty C_k}x_1$. Therefore, $\lim_{n\to\infty}d(W_nx_n,x_n)=0$. Then, as in the proof of Theorem \ref{CQ}, we have that $\lim_{n\to\infty}d(T_ix_n,x_n)=0$ for all $i=1,2,\ldots,r$. Thus $P_{\bigcap_{k=1}^\infty C_k}x_1\in F(T_i)$ for all $r=1,2,\ldots,r$. Thus $P_{\bigcap_{k=1}^\infty C_k}x_1\in\bigcap_{i=1}^rF(T_i)$. Since $F\subset \bigcap_{k=1}^\infty C_k$ and $P_{\bigcap_{k=1}^\infty C_k}x_1\in F$,
\[
d(x_1,P_{\bigcap_{k=1}^\infty C_k}x_1)\leq d(x_1,P_Fx_1)\leq d(x_1,P_{\bigcap_{k=1}^\infty C_k}x_1).
\]
By the definition of the metric projection, we have $P_{\bigcap_{k=1}^\infty C_k}x_1=P_Fx_1$, that is, $\{x_n\}$ is convergent to $P_Fx_1$, and we finish the proof.
\end{proof}

\section*{Acknowledgment}
The author would like to thank Professor Yasunori Kimura for helpful conversations and valuable comments.

\end{document}